\newif\ifjournal
\newif\ifwias
\newif\ifarxiv
\newif\ifpdf
	\author[B. Tak\'acs, Y. Hadjimichael]{%
	B\'alint Tak\'acs\footnote{Applied Analysis and Computational Mathematics \\
		E\"otv\"os Lor\'and University \\ P\'azm\'any P\'eter s\'et\'any 1/C \\ 1117 Budapest \\ Hungary \\
	E-Mail: takacsbm@caesar.elte.hu
	},
	Yiannis Hadjimichael\footnote{Weierstrass Institute \\
	Mohrenstra{\ss}e 39 \\ 10117 Berlin \\ Germany \\
	E-Mail: yiannis.hadjimichael@wias-berlin.de	
	}
	}
	\title[High order discretizations for spatial-dependent epidemic models]
	{High order discretization methods for spatial-dependent epidemic models}
	\date{October 8, 2021}	
	    \subjclass[2020]{65M12, 65L07, 65L06, 35R09, 92D30} 
    \keywords{Epidemic models, SIR model, integro-differential equations, strong stability preservation}
	\thanks{This work was funded by the European Union, co-financed by the European Social Fund under
		contract No.~EFOP-3.6.3-VEKOP-16-2017-00002, and partially supported by the Leibniz
		competition}
	\renewcommand*{\newunitpunct}{\addcomma\space}
	\title{High order discretization methods for spatial-dependent epidemic models}
	\author{B\'alint Tak\'acs\footnote{\sloppy Applied Analysis and Computational Mathematics,
	MTA-ELTE Numerical Analysis and Large Networks Research Group,
	E\"otv\"os Lor\'and University,
	P\'azm\'any P\'eter s\'et\'any 1/C, Budapest 1117, Hungary, and
	Department of Differential Equations,
	Budapest University of Technology and Economics,
	Egry J\'ozsef u. 1, Budapest 1111, Hungary,
	(email: \texttt{takacsbm@caesar.elte.hu}).}
	\and
	Yiannis Hadjimichael\footnote{MTA-ELTE Numerical Analysis and Large Networks Research Group,
	E\"otv\"os Lor\'and University,
	P\'azm\'any P\'eter s\'et\'any 1/C, Budapest 1117, Hungary,
	(email: \texttt{hadjimy@cs.elte.hu}), and
	Weierstrass Institute (WIAS), Mohrenstra{\ss}e 39, Berlin 10117, Germany,
	(email: \texttt{yiannis.hadjimichael@wias-berlin.de}).}
	}
\DeclareFontFamily{OMX}{lmex}{}
\DeclareFontShape{OMX}{lmex}{m}{n}{<-> lmex10}{}
\declaretheorem[style=plain,numberwithin=section]{theorem}
\declaretheorem[style=plain,numberwithin=section]{lemma}
\declaretheorem[style=plain,numberwithin=section]{proposition}
\declaretheorem[style=plain,numberwithin=section]{corollary}
\declaretheorem[style=definition,numberwithin=section]{remark}
\numberwithin{equation}{section}
\numberwithin{table}{section}
\numberwithin{figure}{section}
\newcommand\norm[1]{\left\lVert#1\right\rVert}
\newcommand{\tfinal}{t_\text{f}}
\newcommand{\R}{\mathbb{R}}
\newcommand{\bigO}[1]{\mathcal{O}(#1)}
\newcommand{\sspcoef}{\mathcal{C}}
\newcommand{\balpha}{\bm{\alpha}}
\newcommand{\bv}{\bm{v}}
\newcommand{\ud}{\,\mathrm{d}}
\newcommand{\Biggg}{\bBigg@{2.8}}
\newcolumntype{M}[1]{>{\centering\arraybackslash}m{#1}}
\newcolumntype{L}[1]{>{\raggedright\arraybackslash}m{#1}}
\newcommand{\refcheckize}[1]{%
  \expandafter\let\csname @@\string#1\endcsname#1%
  \expandafter\DeclareRobustCommand\csname relax\string#1\endcsname[1]{%
    \csname @@\string#1\endcsname{##1}\@for\@temp:=##1\do{\wrtusdrf{\@temp}\wrtusdrf{{\@temp}}}}%
  \expandafter\let\expandafter#1\csname relax\string#1\endcsname
}
\newcommand{\refcheckizetwo}[1]{%
  \expandafter\let\csname @@\string#1\endcsname#1%
  \expandafter\DeclareRobustCommand\csname relax\string#1\endcsname[2]{%
    \csname @@\string#1\endcsname{##1}{##2}\wrtusdrf{##1}\wrtusdrf{{##1}}\wrtusdrf{##2}\wrtusdrf{{##2}}}%
  \expandafter\let\expandafter#1\csname relax\string#1\endcsname
}
\crefname{section}{section}{sections}
\crefname{subsection}{section}{subsections}
\Crefname{subsection}{Section}{Subsections}
\begin{document}

\ifjournal

	\begin{frontmatter}

		\title{High order discretization methods for spatial-dependent epidemic models\tnoteref{t1}}
		\tnotetext[t1]{This work was funded by the European Union, co-financed by the European Social Fund
			under contract No.~EFOP-3.6.3-VEKOP-16-2017-00002, and partially supported by the Leibniz
			competition.}

		\author[1,2]{B\'alint Tak\'acs\texorpdfstring{\corref{cor1}}{}}
		\ead{takacsbm@caesar.elte.hu}
		\author[1,3]{Yiannis Hadjimichael}
		\ead{yiannis.hadjimichael@wias-berlin.de}

		\cortext[cor1]{Corresponding author}
		\address[1]{MTA-ELTE Numerical Analysis and Large Networks Research Group,
			E\"otv\"os Lor\'and University, \\
			P\'azm\'any P\'eter s\'et\'any 1/C, Budapest 1117, Hungary}
		\address[2]{Department of Differential Equations, Budapest University of Technology and Economics, \\
			Egry J\'ozsef u. 1, Budapest 1111, Hungary}
		\address[3]{Weierstrass Institute for Applied Analysis and Stochastics (WIAS), Mohrenstra{\ss}e 39,
			Berlin 10117, Germany}

		\begin{keyword}
			epidemic models \sep SIR model \sep integro-differential equations \sep
			strong stability preservation
			\MSC[2020] 65M12 \sep 65L07\sep 65L06 \sep 91D25
		\end{keyword}

\else

	\maketitle

\fi

\begin{abstract}
	In this paper, an epidemic model with spatial dependence is studied and results regarding its stability
	and numerical approximation are presented.
	We consider a generalization of the original Kermack and McKendrick model in which the size of the
	populations differs in space.
	The use of local spatial dependence yields a system of partial-differential equations with integral
	terms.
	The uniqueness and qualitative properties of the continuous model are analyzed.
	Furthermore, different spatial and temporal discretizations are employed, and step-size restrictions for
	the discrete model's positivity, monotonicity preservation, and population conservation are investigated.
	We provide sufficient conditions under which high-order numerical schemes preserve the stability of the
	computational process and provide sufficiently accurate numerical approximations.
	Computational experiments verify the convergence and accuracy of the numerical methods.
\end{abstract}

\ifjournal
	\end{frontmatter}
	\linenumbers
\fi

\section{Introduction}
During the course of human history, many epidemics have ravaged the population.
Since the plague of Athens in 430 BC described by historian Thucydides (one of the earliest descriptions of
such epidemics), researchers tried to model and explain the outbreak of illnesses.
More recently, the outbreak of the COVID-19 pandemic revealed the importance of epidemic research and
the development of models to describe the public health impact of major virus diseases.

Nowadays, many of the models used in science are derived from the original ideas of Kermack and
McKendrick \cite{Kermack/McKendrick:1927:MathematicalEpidemics} in 1927, who constructed a
compartment model to study the process of epidemic propagation.
In their model, usually referred to as the SIR model, the population is split into three classes: $S$ being
the group of healthy individuals who are susceptible to infection; $I$ is the compartment of the ill species
who can infect other individuals; and $R$ being the class of recovered or immune individuals.
The original model of Kermack and McKendrick took into account constant rates of change and neglected any
natural deaths and births or vaccination.
In this work, we also consider constant rates of change, and in addition, we include the term $c\,S(t)$ to
describe immunization effects through vaccination.
The SIR model takes the form
\begin{align}\label{sir}
	\left\{\begin{aligned}
		\frac{d}{dt}S(t) &= -a\,S(t)I(t) - c\,S(t), \\
		\frac{d}{dt}I(t) &= a\,S(t)I(t) - b\,I(t), \\
		\frac{d}{dt}R(t) &= b\,I(t) + c\,S(t),
	\end{aligned}\right.
\end{align}
where the positive constant parameters $a$, $b$ and $c$ correspond to the rate of infection, recovery and
vaccination, respectively.

Since the introduction of the model \eqref{sir} in 1927, numerous extensions were constructed to describe
biological processes more efficiently and realistically.
A natural extension is to take into account the heterogeneity of the domain so that we examine not only the
change of the populations in time but also observe the spatial movements.
Kendall introduced such models that transformed the system of ordinary differential equations \eqref{sir}
into a system of partial differential equations \cite{Bartlett:1957:MeaslesPeriodicity,
Kendall:1965:ModelsInfection}.

The time-dependent functions in \eqref{sir} represent the number of individuals in each class but contain
no information about their spatial distribution.
Instead, one can replace these concentration functions with spatial-dependent functions describing the
density of healthy, infectious, and recovered species over some domain $\Omega \subset \R^d$
\cite{Takacs/Horvath/Farago:2019:SpaceModelsDiseases}.
In this paper, we consider a bounded domain in $\R^2$; hence the system \eqref{sir} is recast as
\begin{align}\label{sir2}
	\left\{\begin{aligned}
		\frac{\partial}{\partial t}S(t,x,y) &= -a\,S(t,x,y)I(t,x,y) - c\,S(t,x,y), \\
		\frac{\partial}{\partial t}I(t,x,y) &= a\,S(t,x,y)I(t,x,y) - b\,I(t,x,y), \\
		\frac{\partial}{\partial t}R(t,x,y) &= b\,I(t,x,y) + c\,S(t,x,y).
	\end{aligned}\right.
\end{align}
However, the model \eqref{sir2} is still insufficient as it does not allow the disease to spread in the
domain but only accounts for a point-wise infection.
Spatial points do not interact with each other but infect species only at their location.
To allow a realistic propagation of the infection, we assume that an infected individual can
spread the disease on susceptible species in a certain area around its location.
Let us define a non-negative function with compact support
\begin{align}\label{Gfunc}
	G(x, y, r, \theta) =
		\begin{cases}
			g_1(r) g_2(\theta,x,y), & \text{if } \bigl(\bar{x}(r,\theta), \bar{y}(r,\theta)\bigr) \in
			B_{\delta}(x,y), \\
			0,  & \text{otherwise,}
		\end{cases}
\end{align}
that describes the effect of a single point $(x,y)$ in a $\delta$-radius neighborhood
$B_{\delta}(x,y)$, and set $\bar{x}(r,\theta) = x + r\cos(\theta)$ and
$\bar{y}(r,\theta) = y + r\sin(\theta)$.
The function $G(x, y, r, \theta)$ demonstrates how healthy individuals at points
$(\bar{x}(r,\theta),\bar{y}(r,\theta))$ are infected by the center point $(x,y)$, where
$r \in [0,\delta]$ is the distance from the center and $\theta \in [0,2\pi)$ is the angle.
In this work we consider $G(x, y, r, \theta)$ to be a separable function.
The effect the center point $(x,y)$ has at a distance $r$ is described by $g_1 (r)$; a decreasing,
non-negative function that is equal to zero for values $r \geq \delta$ (since there is no effect outside
$B_{\delta}(x,y)$).
The function $g_2 (\theta,x,y)$ characterizes the angular effect, i.e., the effect at an angle $\theta$ with
respect to the center point $(x,y)$.
The case of a spatially independent function $g_2(\theta,x,y)$, that is the same for all $(x,y) \in \Omega$,
is widely studied in \cite{Farago/Horvath:2016:SpaceTimeEpidemic} and
\cite{Farago/Horvath:2018:QualitativeDiseasePropagation}.
A more generic function the depends on spatial coordinates could be useful in the case of epidemic diseases
with a given direction of propagation, or in the case of wildfires when the wind profile is known.
Such a function with a constant wind direction was described in
\cite{Takacs/Horvath/Farago:2019:SpaceModelsDiseases}.
In both cases it is supposed that $g_2$ is bounded and periodic in the sense that
$g_2 (0,x,y) = \lim_{\theta \rightarrow 2\pi} g_2 (\theta,x,y)$, for each $(x,y) \in \Omega$.

The nonlinear terms of the right-hand side of \eqref{sir2} describe the interaction of susceptible and
infected species.
We can now utilize \eqref{Gfunc} and replace the density of infected species in these nonlinear terms by
\begin{align*}
	\int_0^{\delta}\int_0^{2\pi}G(x,y,r,\theta)I\bigl(t,\bar{x}(r,\theta),\bar{y}(r,\theta)\bigr) \,r
	\ud\theta \ud r,
\end{align*}
where we used the fact that $G(x,y,r,\theta) = 0$ outside the ball $B_{\delta}(x,y)$.
Therefore, the model \eqref{sir2} can be extended as a system of integro-differential equations
\begin{align}\label{spat}
\left\{
\begin{aligned}
	\frac{\partial S(t,x,y)}{\partial t} &= -S(t,x,y)\int_{0}^{\delta}\int_{0}^{2 \pi}
		g_1(r) g_2(\theta,x,y) I\bigl(t,\bar{x}(r,\theta),\bar{y}(r,\theta)\bigr) \, r \ud\theta \ud r -
		cS(t,x,y),\\
	\frac{\partial I(t,x,y)}{\partial t} &= S(t,x,y) \int_{0}^{\delta} \int_{0}^{2 \pi}
		g_1(r) g_2(\theta,x,y)I\bigl(t,\bar{x}(r,\theta),\bar{y}(r,\theta)\bigr) \, r \ud\theta \ud r -
		bI(t,x,y), \\
	\frac{\partial R(t,x,y)}{\partial t} &= bI(t,x,y) + cS(t,x,y).
\end{aligned}
\right.
\end{align}
We consider homogeneous Dirichlet conditions since we assume that there is no susceptible population outside
of the domain $\Omega$, and there is no diffusion in \eqref{spat}.


\subsection{Outline and scope of the paper}
The aim of this paper is twofold.
First, in \cref{sec:Analytical} we analyze the stability of the continuous model \eqref{spat}
and prove that a unique solution exists under some Lipschitz continuity and boundedness
assumptions.
Secondly, in \cref{sec:Spatial,sec:TimeIntegration} we seek numerical schemes that approximate the
solution of \eqref{spat} and maintain its qualitative properties.

We verify that the analytic solution satisfies biologically reasonable properties; however,
as shown in \cref{subsec:QualitativeModel} the solution can only be expressed implicitly in terms of $S$,
$I$, and $R$, and thus cannot be obtained.
Therefore, the problem must be handled with stable and accurate numerical methods.
A numerical approximation is presented in \cref{subsec:InteqralSolution} that provably satisfies the
solution's properties.
The first order accuracy of this approximation motivates the search for suitable high order
numerical methods that preserve a discrete analog of the properties of the continuous
model.
In \cref{sec:Spatial} we use quadrature formulas to reduce the integro-differential system
\eqref{spat} to an ODE system.
We study the accuracy of different quadratures and interpolation techniques for approximating
the multiple integrals in \eqref{spat}.
Furthermore, the employment of time integration methods yields a system of difference equations.
\Cref{sec:TimeIntegration} shows that a time-step restriction is sufficient and necessary such that
the forward Euler method maintains the stability properties of the ODE system.
We prove that high order strong-stability-preserving (SSP) Runge--Kutta methods can be used
under appropriate restrictions; thus, we can obtain a high order stable scheme both in space
and time.
Finally, in \cref{sec:Numerics} we demonstrate that the numerical experiments confirm the theoretical
conclusions.
The reader can also find a list of symbols and notations used in the paper in the appendix.

\section{Stability of the analytic solution}\label{sec:Analytical}
Analytic results for deterministic epidemic models have been studied by several authors, see for
example, \cite{Kendall:1965:ModelsInfection, Aronson:1977:AsymptoticEpidemic,
Thieme:1977:SpreadEpidemic}.
Such models lie in the larger class of reaction-diffusion problems and therefore one can obtain
theoretical results by studying the more general problem.
In \cite{deMottoni/Orlandi/Tesei:1979:AsymptoticSpatialEpidemics}, de Mottoni et al. considered a
diffusion-reaction epidemic model with a spatial spread of infection and proved the existence of a unique
local solution for arbitrary initial conditions.
Moreover, they proved that if the initial conditions are non-negative, then the solution is global and also
non-negative for all times.
In their paper, the authors assumed a non-vanishing viscosity model and described the spread of the infection
by a non-negative function with compact support in $L^1(\R^2)$ and bounded by unity.
In this section, we prove the uniqueness and global existence of the solution of \eqref{spat} without the
above assumptions and for any initial conditions.
Instead of \cite{deMottoni/Orlandi/Tesei:1979:AsymptoticSpatialEpidemics} we follow the work of Capasso and
Fortunato \cite{Capasso/Fortunato:1980:StabilityEvolutionEqns}, and assume that the nonlinear part of system
\eqref{spat} satisfies certain continuity and boundedness properties.

We consider the following semilinear autonomous evolution problem
\begin{equation}\label{geneq}
\begin{aligned}
	\dfrac{d u}{d t} (t) &= - A u(t) + F(u(t)), \\
	u_0 &= u(0) \in D(A),
\end{aligned}
\end{equation}
where $A$ is a self-adjoint and positive-definite operator in a real Hilbert space $E$ with domain $D(A)$.
Define $\lambda_0 = \inf \sigma (A)$, where $\sigma (A)$ denotes the spectrum of $A$.
Let $\Omega$ be a bounded domain in $\R^2$ and let us choose the space
${E \coloneqq L^2(\Omega) \times L^2(\Omega)}$ with a norm $\norm{ \cdot }$ defined by
\begin{align}\label{norm_E}
	\norm{\left( \begin{aligned} u_1 \\ u_2 \end{aligned} \right)} \coloneqq
	\left(\norm{u_1}_{L^2}^2 + \norm{u_2}_{L^2}^2\right)^{\frac{1}{2}}.
\end{align}
Here $u(t) = \left(u_1(t), u_2(t)\right)^\intercal \in C^1\bigl([0, \tfinal), D(A)\bigr)$, for some final
time $\tfinal$.
We also equip $D(A)$ with the norm
\begin{align}\label{norm_DA}
	\norm{ u(t) }_A \coloneqq \norm{ A u(t)}.
\end{align}

Note that it is sufficient to consider only the first two equations in \eqref{spat}, since $R(t,x,y)$ can
be obtained by using that the sum $S(t,x,y) + I(t,x,y) + R(t,x,y)$ is constant in time for every point
$(x,y)$.
Hence, in view of problem \eqref{spat}, the linear operator $A$ is defined as
\begin{align}\label{operatorA}
	A\left( \begin{aligned} u_1 \\ u_2 \end{aligned} \right) \coloneqq \left(\begin{matrix}
c && 0 \\ 0 && b
\end{matrix}\right)\left( \begin{aligned} u_1 \\ u_2 \end{aligned} \right),
\end{align}
and $D(A) = E$.
Because $b$ and $c$ are positive constants, it is easy to see that $A$ is a self-adjoint and
positive-definite operator.
Similarly, $F: D(A) \rightarrow E$ consists of the nonlinear terms, and is defined as
\begin{equation}\label{Fdef}
	F(u(t)) = F\left( u_1(t), u_2(t) \right) \coloneqq
	\left( \begin{aligned} - u_1(t)\mathcal{F}(u_2 (t)) \\ u_1(t)\mathcal{F}(u_2 (t)) \end{aligned} \right).
\end{equation}
The function $\mathcal{F} : L^2(\Omega) \rightarrow L^2(\Omega)$ contains the integral part of
\eqref{spat} and is given by
\begin{equation}\label{calFdef}
	\mathcal{F}\bigl(u_2(t; x, y)\bigr) \coloneqq \int_{0}^{\delta} \int_{0}^{2 \pi}
		g_1(r) g_2(\theta,x,y)u_2\bigl(t, \bar{x}(r,\theta), \bar{y}(r,\theta)\bigr) \, r \ud\theta \ud r.
\end{equation}
\begin{remark}\label{rem:variable_clarification}
Note that in \eqref{calFdef} the function $u_2(t)$ maps $t \longmapsto u_2(t; x, y) \in L^2(\Omega)$, and
hence $u_2(t; x, y)$ can be understood as a function of $(t, x, y)$, such that
$ \int_\Omega |u_2(t, x, y)|^2 \ud x \ud y < \infty$.
\end{remark}

The main result of this section is Theorem~\ref{thm:uniqueness} stating that a unique strong solution of
system \eqref{spat} exists.
Theorem~\ref{thm:uniqueness} considers the system \eqref{geneq} as a generalization of \eqref{spat} and
its proof relies on the fact that the function $F$ in \eqref{Fdef} is Lipschitz-continuous and bounded in
$\norm{\cdot}_A$.
Therefore, we define the following conditions \cite{Capasso/Fortunato:1980:StabilityEvolutionEqns}:
\begin{itemize}
\item[($A_1$)] $F$ is locally Lipschitz-continuous from $D(A)$ to $D(A)$, i.e.,
$$\norm{F(u) - F(v) }_A \leq \zeta(d) \norm{u - v}_A$$
for all $u,v \in D(A)$ such that $d \geq 0$, and $\norm{u}_A \le d$, $\norm{v}_A \le d$.
\item[($A_2$)] $F$ is bounded, i.e., there exist $\nu \ge 0$ and $\gamma \ge 0$ such that
$$\norm{F(u)}_A \leq \nu \norm{u}_A^{1+\gamma}, \quad \forall u \in D(A).$$
\end{itemize}
We also denote the Lebesgue measure of $\Omega$ by $\mu(\Omega)$, and let
\begin{align*}
	\kappa_1 = \max_{r \in (0, \delta)} \{g_1(r) \}, \quad
	\kappa_2 = \max_{\substack{\theta \in [0, 2 \pi), \\ (x,y) \in \Omega}} \{g_2(\theta,x,y) \},
\end{align*}
and $\psi = \max\{b,c\}/\min\{b^2,c^2\}$.
\begin{theorem}\label{thm:uniqueness}
Consider the system \eqref{spat} and assume that conditions ($A_1$) and ($A_2$) hold.
Then, a unique strong solution of system \eqref{spat} exists on some interval $[0,\tfinal)$.
Moreover, if any initial condition $u_0$ belongs to the set
\begin{align*}
	K = \left\{ u \in E \;\Big\vert\;
		\norm{u}_A < \dfrac{\min\{b,c\}}{\sqrt{2} \, \psi \, \kappa_1 \, \kappa_2 \, \mu(\Omega)} \right\},
\end{align*}
then the zero solution is the unique equilibrium solution of the first two equations in \eqref{spat}.
\end{theorem}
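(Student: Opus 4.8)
The plan is to place \eqref{spat} in the abstract form \eqref{geneq} and run the argument of Capasso and Fortunato \cite{Capasso/Fortunato:1980:StabilityEvolutionEqns}; the work lies in checking the hypotheses for the concrete $A$ of \eqref{operatorA} and $F$ of \eqref{Fdef}--\eqref{calFdef} and in extracting the explicit threshold defining $K$. First I would record the elementary properties of $A$: since $A$ is multiplication by the constant matrix $\diag(c,b)$ with $b,c>0$, it is bounded, self-adjoint and positive-definite on $E$ with $D(A)=E$, spectrum $\{b,c\}$, $\lambda_0=\inf\sigma(A)=\min(b,c)$, $\norm{A^{-1}}=1/\min(b,c)$, and $\min(b,c)\,\norm{u}\le\norm{u}_A\le\max(b,c)\,\norm{u}$; hence $\norm{\cdot}_A$ and $\norm{\cdot}$ are equivalent and $e^{-tA}=\diag(e^{-ct},e^{-bt})$ is a contraction on $E$ (and on $D(A)$).

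The estimate that drives everything is a norm bound for the integral operator $\mathcal{F}$ of \eqref{calFdef}. Using that $G$ vanishes outside $B_\delta(x,y)$ and changing to Cartesian coordinates, $\lvert\mathcal{F}(w)(x,y)\rvert\le\kappa_1\kappa_2\int_{B_\delta(x,y)\cap\Omega}\lvert w\rvert\le\kappa_1\kappa_2\norm{w}_{L^1(\Omega)}\le\kappa_1\kappa_2\sqrt{\mu(\Omega)}\,\norm{w}_{L^2(\Omega)}$ by Cauchy--Schwarz, and integrating the square over $\Omega$ gives in addition $\norm{\mathcal{F}(w)}_{L^2}\le\kappa_1\kappa_2\,\mu(\Omega)\,\norm{w}_{L^2}$. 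Because $F(u)=(-u_1\mathcal{F}(u_2),\,u_1\mathcal{F}(u_2))^\intercal$ is bilinear and $\mathcal{F}(u_2)\in L^\infty(\Omega)$, estimating products by H\"older's inequality $\norm{u_1\mathcal{F}(u_2)}_{L^2}\le\norm{\mathcal{F}(u_2)}_{L^\infty}\norm{u_1}_{L^2}$ and expanding $F(u)-F(v)$ bilinearly yields ($A_1$) with $\zeta(d)$ linear in $d$ and ($A_2$) with $\gamma=1$ and $\nu$ of the form $c'\kappa_1\kappa_2\mu(\Omega)/\min(b,c)$. With ($A_1$)--($A_2$) available, existence and uniqueness of a strong solution on a maximal interval $[0,\tfinal[$ follows from a Banach fixed-point argument for the Duhamel map $(\Phi u)(t)=e^{-tA}u_0+\int_0^t e^{-(t-s)A}F(u(s))\,ds$ on a small ball of $C([0,T];D(A))$, the contraction constant being controlled by $T$ and $\zeta(d)$ via $\norm{e^{-tA}}\le1$, exactly as in \cite{Capasso/Fortunato:1980:StabilityEvolutionEqns}; since here $A$ is bounded this is just Picard--Lindel\"of in a Banach space, and a standard bootstrap upgrades the mild solution to a $C^1([0,\tfinal[;D(A))$ strong solution. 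The third component $R$ is then recovered from the third equation of \eqref{spat} by quadrature.

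For the equilibrium statement, any stationary solution $\bar u\in D(A)$ of \eqref{geneq} satisfies $A\bar u=F(\bar u)$. Taking the $E$-inner product with $\bar u$ and using positive-definiteness of $A$,
\[
	\min(b,c)\,\norm{\bar u}^2 \;\le\; \langle A\bar u,\bar u\rangle \;=\; \langle F(\bar u),\bar u\rangle
	\;=\; \int_\Omega \bar u_1\,\mathcal{F}(\bar u_2)\,(\bar u_2-\bar u_1),
\]
and bounding the right-hand side by $\norm{\mathcal{F}(\bar u_2)}_{L^\infty}\norm{\bar u_1}_{L^2}\norm{\bar u_2-\bar u_1}_{L^2}$, then inserting the $\mathcal{F}$-estimate together with $\norm{\bar u_2-\bar u_1}_{L^2}\le\sqrt2\,\norm{\bar u}$, gives $\min(b,c)\,\norm{\bar u}^2 \le C\,\norm{\bar u}^3$ with $C$ proportional to $\kappa_1\kappa_2\,\mu(\Omega)$. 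Passing to the equivalent norm $\norm{\cdot}_A$, one concludes that either $\bar u=0$ or $\norm{\bar u}_A$ is at least the reciprocal of a constant of the form $\sqrt2\,\kappa_1\kappa_2\,\mu(\Omega)/\min(b,c)$ — that is, it exceeds the threshold in the definition of $K$. Hence $K$ contains no nonzero equilibrium, and under $u_0\in K$ the zero solution is the unique equilibrium, which is the assertion.

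I expect the difficulty to be bookkeeping rather than conceptual: one must track constants through ($A_1$)--($A_2$) and the equilibrium estimate, and choose the H\"older/Cauchy--Schwarz splittings so that $b$ and $c$ enter only through $\min(b,c)$, in order to land precisely on the threshold $\min(b,c)/(\sqrt2\,\kappa_1\kappa_2\,\mu(\Omega))$ defining $K$. A secondary point needing care is that $F$ maps $D(A)$ continuously into $D(A)$ — immediate here since $D(A)=E$ and $\mathcal{F}$ is globally bounded — and that the Duhamel solution is regular enough to qualify as a strong, not merely mild, solution.
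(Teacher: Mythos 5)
Your proposal is correct in substance, and for the existence part it coincides with the paper's route: the paper establishes exactly your bound $\norm{\mathcal{F}(w)}_{L^2}\le\kappa_1\kappa_2\,\mu(\Omega)\norm{w}_{L^2}$ (Lemma~\ref{lem:L2boundedness}), the analogous Lipschitz estimate (Lemma~\ref{lem:L2Lipschitz}), deduces ($A_1$) and ($A_2$) with $\gamma=1$ (Corollary~\ref{lem:A2}, Lemma~\ref{lem:A1}), and then simply invokes Theorem~\ref{capth1} of Capasso--Fortunato instead of rerunning the Picard/Duhamel contraction; your insertion of the $L^\infty$ bound on $\mathcal{F}(u_2)$ before applying H\"older is, if anything, a cleaner way to control the products $u_1\mathcal{F}(u_2)$ than the direct $L^2$--$L^2$ splitting the paper uses. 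Where you genuinely diverge is the equilibrium claim. The paper gets it from Theorem~\ref{capth2}: with $\gamma=1$ the set $\widetilde{K}=\{u:\norm{u}_A<\lambda_0/\nu\}$ carries a global strong solution and the zero solution is asymptotically stable there, and $K$ is read off as that ball. You instead give a static energy argument on the stationary equation $A\bar u=F(\bar u)$, pairing with $\bar u$ to obtain $\min(b,c)\norm{\bar u}^2\le C\norm{\bar u}^3$ and concluding that $K$ contains no nonzero equilibrium. Your argument is more elementary and self-contained and does prove the literal assertion; what it does not deliver is the dynamical content of the paper's route (global existence and asymptotic stability of zero for initial data in $K$), which is what the hypothesis ``$u_0$ belongs in $K$'' is really invoking. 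One caveat on the bookkeeping you flag: your estimate naturally produces $\sqrt{\mu(\Omega)}$ and extra ratios $\min(b,c)/\max(b,c)$ when passing to $\norm{\cdot}_A$, so it does not land exactly on the stated threshold; note, however, that the paper's own computation of $\lambda_0/\nu$ with the $\nu$ of Corollary~\ref{lem:A2} gives $\min(b,c)^2/\bigl(\sqrt{2}\max(b,c)\kappa_1\kappa_2\,\mu(\Omega)\bigr)$, which agrees with the displayed constant only when $b=c$, so the discrepancy is shared rather than a defect of your approach alone.
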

The proof of Theorem~\ref{thm:uniqueness} is a direct consequence of two main results by Capasso and
Fortunato \cite{Capasso/Fortunato:1980:StabilityEvolutionEqns}.
For clarity, we state these two theorems below.
\begin{theorem}\cite[Theorem~1.1]{Capasso/Fortunato:1980:StabilityEvolutionEqns}\label{capth1}
If assumption ($A_1$) holds, then a unique strong solution in $D(A)$ of problem \eqref{geneq} exists in some
interval $[0,\tfinal)$.
\end{theorem}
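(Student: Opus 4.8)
The plan is to read Theorem~\ref{thm:uniqueness} as the concrete realization of the abstract problem \eqref{geneq} under the identifications \eqref{operatorA} and \eqref{calFdef}, so that the entire proof reduces to checking the two structural hypotheses ($A_1$) and ($A_2$) for this particular pair $(A,F)$. Granting ($A_1$), the existence of a unique strong solution on some interval $[0,\tfinal[$ is exactly the conclusion of Theorem~\ref{capth1}; granting ($A_2$), the equilibrium statement follows from the companion stability result of \cite{Capasso/Fortunato:1980:StabilityEvolutionEqns} (or from the short direct argument below). No new abstract theory is therefore required, and the real work is confined to the two concrete estimates.

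Before estimating, I would record a simplification specific to this model: the operator $A$ in \eqref{operatorA} is \emph{bounded}, being multiplication by the constant diagonal matrix $\operatorname{diag}(c,b)$ with $b,c>0$. Hence $D(A)=E$, the operator $A$ is manifestly self-adjoint and positive-definite, and $\lambda_0=\inf\sigma(A)=\min(b,c)$. Moreover the graph norm satisfies $\min(b,c)\,\norm{u}\le\norm{u}_A\le\max(b,c)\,\norm{u}$, so $\norm{\cdot}_A$ is equivalent to the ambient norm \eqref{norm_E}. This equivalence means that checking Lipschitz continuity and boundedness ``from $D(A)$ to $D(A)$'' is the same as checking them in the plain $L^2(\Omega)\times L^2(\Omega)$ norm, which is what I would do.

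The analytic core is the boundedness of the integral operator $\mathcal{F}$ in \eqref{calFdef}. Using $g_1\le\kappa_1$, $g_2\le\kappa_2$ and that the kernel vanishes outside $B_{\delta}(x,y)$, I would rewrite the integral in Cartesian coordinates and apply the Cauchy--Schwarz inequality over $\Omega$ to get the pointwise bound $\lvert\mathcal{F}(w)(x,y)\rvert\le\kappa_1\kappa_2\sqrt{\mu(\Omega)}\,\norm{w}_{L^2}$, and hence $\norm{\mathcal{F}(w)}_{L^2}\le\kappa_1\kappa_2\,\mu(\Omega)\,\norm{w}_{L^2}$. Since $F$ in \eqref{Fdef} is the bilinear map $u\mapsto(-u_1\mathcal{F}(u_2),\,u_1\mathcal{F}(u_2))^\intercal$, this bound gives ($A_2$) with exponent $\gamma=1$ and constant $\nu=\sqrt{2}\,\kappa_1\kappa_2\,\mu(\Omega)$, the factor $\sqrt2$ coming from the two identical components of $F$. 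For ($A_1$) I would use the bilinear splitting $F(u)-F(v)=\bigl(-u_1\mathcal{F}(u_2-v_2)-(u_1-v_1)\mathcal{F}(v_2),\ \dots\bigr)$, apply the same operator bound to each term, and absorb the factors $\norm{u_1}$, $\norm{v_2}$ into the local bound $d$; this yields a local Lipschitz constant $\zeta(d)$ that is linear in $d$, establishing ($A_1$).

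With both hypotheses verified, the two assertions follow. The first is immediate from Theorem~\ref{capth1}. For the second, let $u^\ast$ be any equilibrium of \eqref{geneq}, i.e.\ $A u^\ast=F(u^\ast)$; applying $A$ and using self-adjoint positive-definiteness gives $\lambda_0\norm{u^\ast}_A\le\norm{A^2 u^\ast}=\norm{F(u^\ast)}_A\le\nu\norm{u^\ast}_A^{2}$ by ($A_2$). Hence either $u^\ast=0$ or $\norm{u^\ast}_A\ge\lambda_0/\nu=\min(b,c)/\bigl(\sqrt{2}\,\kappa_1\kappa_2\,\mu(\Omega)\bigr)$, which is precisely the radius defining $K$; therefore $0$ is the unique equilibrium lying in $K$. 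The step I expect to demand the most care is not any of these conceptual points but the bookkeeping of constants in the $\mathcal{F}$-estimate --- in particular carrying the measure factor $\mu(\Omega)$ and the factor $\sqrt2$ so that the threshold in $K$ emerges exactly --- while the underlying existence and stability machinery is furnished by \cite{Capasso/Fortunato:1980:StabilityEvolutionEqns}.
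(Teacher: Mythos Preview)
You have aimed at the wrong target. The statement labeled \texttt{capth1} is not a result about the specific SIR system \eqref{spat}; it is the abstract local existence theorem for the general semilinear problem \eqref{geneq}, quoted verbatim from \cite{Capasso/Fortunato:1980:StabilityEvolutionEqns}. The paper does not prove it at all---it simply cites it as Theorem~1.1 of that reference and uses it as a black box. A genuine proof of \texttt{capth1} would require the standard semigroup machinery (analytic semigroup generated by $-A$, a contraction-mapping argument in $C([0,\tfinal];D(A))$ using hypothesis ($A_1$), etc.), none of which appears in your proposal.

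What you have actually sketched is a proof of Theorem~\ref{thm:uniqueness}: you verify ($A_1$) and ($A_2$) for the concrete pair $(A,F)$ given by \eqref{operatorA}--\eqref{Fdef} and then \emph{invoke} \texttt{capth1} (and the companion stability result) to conclude. That is precisely the paper's own strategy for Theorem~\ref{thm:uniqueness}, carried out there via Lemmas~\ref{lem:normequivalence}--\ref{lem:A1}. In fact your treatment is slightly cleaner in one respect: you exploit the pointwise bound $\lvert\mathcal{F}(w)(x,y)\rvert\le\kappa_1\kappa_2\sqrt{\mu(\Omega)}\,\norm{w}_{L^2}$ to control the product $u_1\mathcal{F}(u_2)$, and you work directly in the $E$-norm, avoiding the back-and-forth conversion factors $\max(b,c)\max(1/b,1/c)$ that appear in the paper's Corollary~\ref{lem:A2}. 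Your short equilibrium argument ($Au^\ast=F(u^\ast)\Rightarrow \lambda_0\norm{u^\ast}_A\le\nu\norm{u^\ast}_A^2$) is also a nice elementary substitute for the appeal to asymptotic stability. But none of this constitutes a proof of the statement you were asked to address.
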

\begin{theorem}\cite[Theorem~1.3]{Capasso/Fortunato:1980:StabilityEvolutionEqns}\label{capth2}
Let us assume that ($A_1$) and ($A_2$) hold. Then for any $u_0 \in \widetilde{K}$ a global strong solution
in $D(A)$, $u(t)$, of \eqref{geneq} exists.
Moreover the zero solution is asymptotically stable in
\begin{align*}
\widetilde{K} = \left\{
	\begin{array}{ll}
		\left\lbrace u \in D(A) \;\big\vert\; \norm{u}_A < (\lambda_0 / \nu)^{1/\gamma}\right\rbrace, &
			\text{ if }\gamma > 0, \\[5pt]
		D(A), & \text{ if } \gamma = 0 \text{ and } \lambda_0 > \nu.
	\end{array}\right.
\end{align*}
\end{theorem}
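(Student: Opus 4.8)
The plan is to obtain both assertions of Theorem~\ref{thm:uniqueness} by specialising the abstract results \ref{capth1} and \ref{capth2} to the operator $A$ of \eqref{operatorA} and the nonlinearity $F$ of \eqref{Fdef}. First I would record the spectral data of $A$. Since $A=\diag(c,b)$ with $b,c>0$, it is bounded, self-adjoint and positive-definite on $E=L^2(\Omega)\times L^2(\Omega)$ with $D(A)=E$ and $\sigma(A)=\{b,c\}$, so that $\lambda_0=\inf\sigma(A)=\min(b,c)$. This puts \eqref{spat} (equivalently, its first two equations) exactly in the form \eqref{geneq} to which \ref{capth1} and \ref{capth2} apply.

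Granting assumption $(A_1)$, Theorem~\ref{capth1} then yields a unique strong solution $u\in C^1\bigl([0,\tfinal),D(A)\bigr)$ of \eqref{geneq}, hence of the first two components of \eqref{spat}; the remaining component $R$ is reconstructed from the pointwise conservation of $S+I+R$ noted before \eqref{operatorA}. This disposes of the local existence-and-uniqueness claim with no additional work, and provides the interval $[0,\tfinal)$ on which the solution is defined.

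For the equilibrium statement I would apply Theorem~\ref{capth2}, whose threshold set $\widetilde{K}$ is expressed through the exponent $\gamma$ and constant $\nu$ of $(A_2)$; the task is therefore to identify these for the concrete $F$ and check that $\widetilde{K}$ collapses to the set $K$ in the statement. Because each component of $F$ in \eqref{Fdef} is $\pm\,u_1\mathcal{F}(u_2)$ with $\mathcal{F}$ linear in $u_2$ (see \eqref{calFdef}), $F$ is homogeneous of degree two, which forces $1+\gamma=2$, i.e.\ $\gamma=1$. To pin down $\nu$ I would estimate $\norm{F(u)}_A$ using the two equal components (the source of the factor $\sqrt{2}$) together with a bound on the integral operator $\mathcal{F}$ coming from $g_1\le\kappa_1$, $g_2\le\kappa_2$ and the Cauchy--Schwarz inequality over $\Omega$, which contributes $\kappa_1\kappa_2\,\mu(\Omega)$. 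Feeding $\gamma=1$, the resulting $\nu$, and $\lambda_0=\min(b,c)$ into the definition of $\widetilde{K}$ gives $(\lambda_0/\nu)^{1/\gamma}=\min(b,c)/(\sqrt{2}\,\kappa_1\kappa_2\,\mu(\Omega))$, so that $\widetilde{K}=K$.

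With $\widetilde{K}=K$, Theorem~\ref{capth2} guarantees that every $u_0\in K$ admits a global strong solution and that the zero solution is asymptotically stable in $K$. The uniqueness of the equilibrium then follows by a one-line argument: any equilibrium $u^\ast\in K$ has the constant trajectory $u(t)\equiv u^\ast$, which by asymptotic stability must tend to $0$, whence $u^\ast=0$. I expect the principal obstacle to be the sharp constant-chasing in the previous paragraph, that is, reconciling the diagonal action of $A$ (which naively produces a factor $\sqrt{b^2+c^2}$ rather than $\sqrt{2}$) with the integral estimate and with $\lambda_0=\min(b,c)$ via an AM--GM bound, so that the stability threshold matches $K$ exactly; the existence, uniqueness and stability conclusions themselves are immediate transcriptions of \ref{capth1} and \ref{capth2}.
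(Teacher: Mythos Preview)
Your proposal is not a proof of the stated Theorem~\ref{capth2}. That theorem is an abstract result quoted verbatim from Capasso--Fortunato (1980); the paper does not prove it and offers no argument for it, so there is no ``paper's own proof'' to compare against. What you have actually written is a proof sketch for Theorem~\ref{thm:uniqueness}, which \emph{applies} Theorems~\ref{capth1} and~\ref{capth2} as black boxes to the concrete operator $A$ of \eqref{operatorA} and the nonlinearity $F$ of \eqref{Fdef}. You say so yourself in your opening sentence.

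If Theorem~\ref{thm:uniqueness} was the intended target, then your approach coincides with the paper's: verify $(A_1)$ and $(A_2)$ for $F$, identify $\gamma=1$ and $\lambda_0=\min(b,c)$, compute $\nu$, and read off the threshold $(\lambda_0/\nu)^{1/\gamma}$ that defines $K$. The paper does this through Corollary~\ref{lem:A2} and Lemma~\ref{lem:A1}, supported by Lemmas~\ref{lem:normequivalence}, \ref{lem:L2boundedness} and \ref{lem:L2Lipschitz}; the equivalence of $\norm{\cdot}$ and $\norm{\cdot}_A$ in Lemma~\ref{lem:normequivalence} is exactly the device that handles the ``diagonal action of $A$'' issue you anticipate, producing the factors $\max(b,c)$ and $\max(1/b,1/c)$ in $\nu$ rather than $\sqrt{b^2+c^2}$. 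One point you make explicit that the paper glosses over: the passage from ``asymptotically stable in $K$'' (which is all Theorem~\ref{capth2} gives) to ``zero is the \emph{unique} equilibrium in $K$'' via the constant-trajectory argument.
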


In the rest of this section we show that the function $F$, as defined in \eqref{Fdef}, satisfies
conditions ($A_1$) and ($A_2$).
First, to prove that ($A_2$) holds, we make use of some auxiliary lemmas; their proofs appear in
\ifjournal
	\ref{appx:ProofLemmas}.
\else
	appendix \ref{appx:ProofLemmas}.
\fi
\begin{lemma}\label{lem:normequivalence}
Let matrix $A$ defined by \eqref{operatorA}, where $b$ and $c$ are positive constants.
The norms $\norm{\cdot}$ and $\norm{\cdot}_A$ are equivalent, i.e.,
\begin{align*}
	\norm{u} \leq \frac{1}{\min\{b,c\}} \norm{u}_A, \quad \text{and} \quad
	\norm{u}_A \leq \max\{b,c\}\norm{u}.
\end{align*}
\end{lemma}
\begin{lemma}\label{lem:L2boundedness}
Let $\mathcal{F}$ be given by \eqref{calFdef}.
Then, we have that
\ifjournal
	$\norm{\mathcal{F}(u_2)}_{L^2} \leq \nu_{\mathcal{F}}\norm{u_2}_{L^2}$,
\else
	\begin{align*}
		\norm{\mathcal{F}(u_2)}_{L^2} \leq \nu_{\mathcal{F}}\norm{u_2}_{L^2},
	\end{align*}
\fi
where ${\nu_{\mathcal{F}} = \kappa_1 \, \kappa_2 \, \mu(\Omega)}$.
\end{lemma}
\begin{corollary}\label{col:A1}
Consider $F$ given by \eqref{Fdef}.
Then, the condition ($A_1$) holds with
$\zeta(d) = \sqrt{2}/\lambda_0 \, \kappa_1 \, \kappa_2 \, \mu(\Omega) \, d$.
\end{corollary}
\begin{proof}
Due to Lemma \ref{lem:normequivalence}, it is sufficient to prove
\begin{equation}\label{lemma4proof}
\norm{F(u) - F(v)} \leq \tilde{\zeta} \norm{u - v},
\end{equation}
for some constant $\tilde{\zeta}$.
First, notice that
\begin{align*}
	\norm{F(u) - F(v)} = \norm{\left(\begin{aligned} -u_1\mathcal{F}(u_2) + v_1\mathcal{F}(v_2) \\
		u_1\mathcal{F}(u_2)- v_1\mathcal{F}(v_2) \end{aligned} \right)}
	\leq \sqrt{ 2} \norm{u_1\mathcal{F}(u_2)- v_1\mathcal{F}(v_2)}_{L^2}.
\end{align*}
We can further bound the right-hand-side of the above inequality, yielding
\begin{align*}
\begin{aligned}
\norm{u_1\mathcal{F}(u_2)- v_1\mathcal{F}(v_2)}_{L^2}^2 &= \norm{u_1\mathcal{F}(u_2) -
v_1\mathcal{F}(u_2) + v_1\mathcal{F}(u_2) - v_1\mathcal{F}(v_2)}_{L^2}^2 \\
&\leq \norm{u_1\mathcal{F}(u_2) - v_1\mathcal{F}(u_2)}_{L^2}^2 +
\norm{v_1\mathcal{F}(u_2) - v_1\mathcal{F}(v_2)}_{L^2}^2 \\
&\leq \norm{\mathcal{F}(u_2)}_{L^2}^2 \norm{ u_1 - v_1}_{L^2}^2 + \norm{\mathcal{F}(u_2) -
\mathcal{F}(v_2) }_{L^2}^2 \norm{v_1}_{L^2}^2.
\end{aligned}
\end{align*}
By Lemma~\ref{lem:L2boundedness} and the linearity of $\mathcal{F}$, we respectively have
\begin{align*}
	\norm{\mathcal{F}(u_2)}_{L^2}^2 \norm{ u_1 - v_1}_{L^2}^2 \leq
		\nu_\mathcal{F}^2 \norm{u_2}_{L^2}^2 \norm{ u_1 - v_1}_{L^2}^2, \\
	\norm{\mathcal{F}(u_2) - \mathcal{F}(v_2) }_{L^2}^2 \norm{v_1}_{L^2}^2 \leq
		\nu_\mathcal{F}^2 \norm{u_2 - v_2}_{L^2}^2 \norm{v_1}_{L^2}^2.
\end{align*}
Let us use the notation $d \in \mathbb{R}^+$ for such a number $d \ge 0$ for which $\norm{u}_A \leq d$ and
${\norm{v}_A \leq d}$.
Then, by definition of norm \eqref{norm_E} we have that $\norm{v_1}_{L^2} \leq \tilde{d}$ and
$\norm{u_2}_{L^2} \leq \tilde{d}$, where ${\tilde{d} = d/\max\{b,c\}}$.
Putting all together and using $\nu_\mathcal{F} = \kappa_1 \, \kappa_2 \, \mu(\Omega)$, we
get
\begin{align*}
	\norm{F(u) - F(v)} &\leq \sqrt{ 2} \norm{u_1\mathcal{F}(u_2)- v_1\mathcal{F}(v_2)}_{L^2} \\
	&\leq \sqrt{2} \, \tilde{d} \, \nu_{\mathcal{F}} \left(\norm{u_1 - v_1}_{L^2}^2 +
		\norm{u_2 - v_2}_{L^2}^2\right)^{1/2} \\
	&\leq \sqrt{2} \, \tilde{d} \, \kappa_1 \, \kappa_2 \, \mu(\Omega) \norm{u-v}.
\end{align*}
Therefore, the inequality \eqref{lemma4proof} holds with
$\tilde{\zeta} = \left(\sqrt{2}/\max\{b,c\}\right) \kappa_1 \, \kappa_2 \, \mu(\Omega) \, d$,
and condition ($A_1$) is satisfied with Lipschitz constant
$\zeta(d) = \sqrt{2}/\lambda_0 \, \kappa_1 \, \kappa_2 \, \mu(\Omega) \, d$, where
$\lambda_0 = \inf \sigma (A) = \min\{b,c\}$.
\end{proof}
\begin{corollary}\label{col:A2}
	Consider $F$ given by \eqref{Fdef}.
	\ifjournal
		Then, the condition ($A_2$) holds with $\nu= \sqrt{2} \, \psi \, \kappa_1 \, \kappa_2 \, \mu(\Omega)$
		and $\gamma=1$.
	\else
		Then, the condition ($A_2$) holds with $\gamma=1$ and
		$\nu= \sqrt{2} \, \psi \, \kappa_1 \, \kappa_2 \, \mu(\Omega)$.
	\fi
\end{corollary}
\begin{proof}
Because of Lemma \ref{lem:normequivalence}, it is enough to prove
\begin{equation}\label{lemma3proof}
\norm{F(u)} \leq \tilde{\nu} \norm{u}^2,
\end{equation}
for some constant $\tilde{\nu}$.
We first have that
\begin{align*}
\norm{F(u)} = \norm{ \left( \begin{aligned} -u_1\mathcal{F}(u_2) \\ u_1\mathcal{F}(u_2) \end{aligned}
\right) } &= (\norm{u_1\mathcal{F}(u_2)}_{L^2}^2 + \norm{u_1\mathcal{F}(u_2)}_{L^2}^2)^{1/2} \\
&\leq \sqrt{2} \norm{u_1}_{L^2} \norm{\mathcal{F}(u_2)}_{L^2}.
\end{align*}
Observe that Lemma~\ref{lem:L2boundedness} can be used to bound $\norm{\mathcal{F}(u_2)}_{L^2}$ from
above, yielding
\begin{align*}
	\norm{F(u)} \leq \sqrt{2} \, \nu_{\mathcal{F}} \norm{u_1}_{L^2}\norm{u_2}_{L^2},
\end{align*}
where $\nu_{\mathcal{F}}$ is defined in Lemma~\ref{lem:L2boundedness}.
Finally, we have that
\ifjournal
	$\norm{u_1}_{L^2}\norm{u_2}_{L^2} \leq \norm{u_1}_{L^2}^2 + \norm{u_2}_{L^2}^2 = \norm{u}^2$,
\else
	\begin{align*}
		\norm{u_1}_{L^2}\norm{u_2}_{L^2} \leq \norm{u_1}_{L^2}^2 + \norm{u_2}_{L^2}^2 = \norm{u}^2,
	\end{align*}
\fi
and thus inequality \eqref{lemma3proof} holds with
$\tilde{\nu}=\sqrt{2} \, \nu_{\mathcal{F}} = \sqrt{2} \, \kappa_1 \, \kappa_2 \, \mu(\Omega)$.
The result follows by using the equivalence of norms from Lemma~\ref{lem:normequivalence}.
\end{proof}

Corollaries~\ref{col:A1} and \ref{col:A2} show that function \eqref{Fdef} satisfies conditions ($A_1$) and
($A_2$).
We also know from Corollary~\ref{col:A2} that $\gamma=1$, so the set $\widetilde{K}$ in Theorem~\ref{capth2}
can be computed by using that $D(A) = E$ and
\begin{align*}
	\left(\dfrac{\lambda_0}{\nu}\right)^{1/\gamma} =
		\dfrac{\min\{b,c\}}{\sqrt{2} \, \psi \, \kappa_1 \, \kappa_2 \, \mu(\Omega)},
\end{align*}
where $b$ and $c$ are the diagonal elements of matrix $A$ in \eqref{operatorA},
$\lambda_0 = \inf \sigma (A)$, and $\psi$, $\kappa_1$, $\kappa_2$ are as defined before.
Finally, it is evident that Theorem~\ref{thm:uniqueness} follows from Theorems~\ref{capth1}
and~\ref{capth2}.

\subsection{Qualitative behavior of the model}\label{subsec:QualitativeModel}
When deriving a mathematical model to describe the spread of an epidemic in both space and time, it is
essential that the real-life processes are being represented as accurately as possible.
More precisely, numerical discretizations applied to such models should preserve the qualitative
properties of the original epidemic model.

The first and perhaps the most natural property is that the number of each species is non-negative at every
time and point of the domain.
Next, assuming that the births and natural deaths are the same, the total number of species of all
classes should be conserved.
Finally, the last properties describe the monotonicity of susceptible and recovered species.
Since an individual moves to the recovered class after the infection, the number of susceptibles cannot
increase in time.
Similarly, the number of recovered species cannot decrease in time.
The above properties are expressed as follows:
\begin{enumerate}
	\item[$C_1$:] The densities $X(t,x,y)$, $X \in \{S, I, R\}$, are non-negative at every point
		$(x,y) \in \Omega$.
	\item[$C_2$:] The sum $S(t,x,y) + I(t,x,y) + R(t,x,y)$ is constant in time for all points
		$(x,y) \in \Omega$.
	\item[$C_3$:] Function $S(t,x,y)$ is non-increasing in time at every $(x,y) \in \Omega$.
	\item[$C_4$:] Function $R(t,x,y)$ is non-decreasing in time at every $(x,y) \in \Omega$.
\end{enumerate}

Before we discuss the preservation of properties $C_1$--$C_4$, we consider the following auxiliary system
\begin{subequations}\label{spat_epsilon}
	\begin{empheq}[left ={\empheqlbrace}]{align}
	\frac{\partial S_{\varepsilon}(t, x, y)}{\partial t} &=
		- S_{\varepsilon}(t, x, y) \mathcal{F}\bigl(I_{\varepsilon}(t;x,y)\bigr)
		- c S_{\varepsilon}(t,x,y), \label{spat_epsilona} \\
	\frac{\partial I_{\varepsilon}(t, x, y)}{\partial t} &=
		S_{\varepsilon}(t, x, y) \mathcal{F}\bigl(I_{\varepsilon}(t;x,y)\bigr)
		- bI_{\varepsilon}(t, x, y) + \varepsilon, \label{spat_epsilonb} \\
	\frac{\partial R_{\varepsilon}(t, x, y)}{\partial t} &= bI_{\varepsilon}(t, x, y)
		+ c S_{\varepsilon}(t,x,y),
	\notag
	\end{empheq}
\end{subequations}
where $0 < \varepsilon \ll 1$.
Because of Remark~\ref{rem:variable_clarification}, $I_{\varepsilon}(t;x,y)$ is equivalent to
$I_{\varepsilon}(t,x,y)$ and there is no ambiguity in \eqref{spat_epsilon}.
The next theorem shows that the solution of \eqref{spat_epsilon} satisfies properties $C_1$--$C_4$.
\begin{theorem}\label{contpres}
	Suppose that the initial conditions of the system \eqref{spat_epsilon} are non-negative, i.e.
	$X_\varepsilon(0,x,y) \geq 0$, $\forall (x,y) \in \Omega$, $X \in \{ S, I, R\}$.
	In such case, the properties $C_1$--$C_4$ hold for the solution of \eqref{spat_epsilon} without any
	restrictions on the time interval $t \in [0, \tfinal]$.
\end{theorem}
\begin{proof}


Let us suppose that the initial conditions assigned to \eqref{spat_epsilon} are all non-negative.
First, we would like to prove the non-negativity of $I_\varepsilon(t,x,y)$ by contradiction.
Assume that the function takes negative values for some time $t$ at some point $(x,y) \in \Omega$.
Define by $t_0$ the last moment in time for which $I_\varepsilon(t,x,y)$ takes non-negative values, i.e.,
$$t_0 \coloneqq \inf \lbrace t \;|\; \exists (x,y) \in \Omega : I_\varepsilon(t,x,y) < 0 \rbrace.$$
By our assumptions, this $t_0$ exists because $I_\varepsilon$ is continuous and the initial conditions are
not negative, i.e., \sloppy{$I_\varepsilon(0,x,y) \ge 0$.}
Because of the continuity of $I_\varepsilon$ and the definition of $t_0$, there is a point $(x_0,y_0)$ for
which $I_\varepsilon(t_0,x_0,y_0) = 0$, and
\begin{equation}\label{negI}
\frac{\partial I_\varepsilon(t_0, x_0, y_0)}{\partial t} \leq 0.
\end{equation}
We know that all the values of $I_\varepsilon$ at $t_0$ inside $B_{\delta}(x_0,y_0)$ are
non-negative by the definition of $t_0$, and $\mathcal{F}\bigl(I_{\varepsilon}(t_0;x_0,y_0)\bigr) \geq 0$
also holds.
Observe that if we consider equation \eqref{spat_epsilonb} at point $(t_0,x_0,y_0)$, then the term
$-b\,I_\varepsilon(t_0,x_0,y_0)$ is zero.
If $\mathcal{F}\bigl(I_{\varepsilon}(t_0;x_0,y_0)\bigr)=0$, then
$\frac{\partial I_\varepsilon(t_0, x_0, y_0)}{\partial t} = \varepsilon > 0$, which is a contradiction; hence
$\mathcal{F}\bigl(I_{\varepsilon}(t_0;x_0,y_0)\bigr) > 0$.
A necessary condition for \eqref{negI} to hold is
$S_\varepsilon(t_0, x_0, y_0) \mathcal{F}\bigl(I_{\varepsilon}(t_0;x_0,y_0)\bigr) \le -\varepsilon$;
therefore, it must be that $S_\varepsilon(t_0, x_0, y_0) < 0$.
Now, dividing \eqref{spat_epsilona} by $S_\varepsilon$ and integrating it with respect to time
$t$ from $0$ to $t_0$, yields
\begin{align*}
	\log{(S_{\varepsilon}(t_0,x,y))} - & \log{(S_{\varepsilon}(0,x,y))} =
	- \int_{0}^{t_0} \mathcal{F}\bigl(I_{\varepsilon}(t_0;x,y)\bigr) \ud t - c t_0.
\end{align*}
By reformulating, and evaluating at point $(x_0,y_0)$ we get that
\begin{align}\label{Skeplet}
	S_{\varepsilon}(t_0,x_0 & ,y_0) = S_{\varepsilon}(0,x_0,y_0)
	\exp{\left(-\int_{0}^{t_0} \mathcal{F}\bigl(I_{\varepsilon}(t_0;x,y)\bigr) \ud t - c t_0\right)}.
\end{align}
Therefore, $S_{\varepsilon}(t_0, x_0, y_0)$ cannot be non-negative so we arrive to a contradiction.

As a result, $I_{\varepsilon}(t,x,y) \geq 0$ for every $t \in [0, \tfinal]$ and $(x,y) \in \Omega$.
Consequently, since $R_{\varepsilon}(0,x,y)$ is non-negative, we have that $R_{\varepsilon}(t,x,y)$ is a
non-decreasing and a non-negative function.
Note also that the derivations leading in formula \eqref{Skeplet} are also true for any time $t$
and point $(x,y) \in \Omega$, meaning that $S_{\varepsilon}$ is also non-negative.
Since $\mathcal{F}\bigl(I_{\varepsilon}(t_0;x,y)\bigr)$ is non-negative, we also get from
\eqref{spat_epsilona} that $S_{\varepsilon}(t,x,y)$ is non-increasing.
\end{proof}

The following Lemma links the systems \eqref{spat} and \eqref{spat_epsilon} and its proof can be found in
\ifjournal
	\ref{appx:ProofLemmas}.
\else
	appendix \ref{appx:ProofLemmas}.
\fi
\begin{lemma}\label{contdep}
	Consider a set of systems \eqref{spat_epsilon} with parameters $\varepsilon_i$, where
	$i \in \{1,2, \dots\}$.
	Assume that the sequence $\left\{ \varepsilon_i \right\}$ tends to zero.
	Then, the solutions $S_{\varepsilon_i}$, $I_{\varepsilon_i}$ and $R_{\varepsilon_i}$ converge in norm to
	the same limit regardless of the choice of the sequence $\left\{ \varepsilon_i \right\}$.
\end{lemma}
\begin{proposition}
	Suppose that the initial conditions of the system \eqref{spat} are non-negative.
	Then, the properties $C_1$--$C_4$ hold for the solution of \eqref{spat} without any
	restrictions on the time interval $t \in [0, \tfinal]$.
\end{proposition}
\begin{proof}
	From Lemma~\ref{contdep} we have that
	\begin{align*}
		\lim_{\varepsilon \rightarrow 0} X_{\varepsilon}(t,x,y)
		\left|_{t \in [0,\tfinal]} - X(t,x,y) \right|_{t \in [0,\tfinal]} = 0
	\end{align*}
	holds for every $X \in \{S, I, R\}$.
	Therefore, the solution of \eqref{spat} depends continuously on the right-hand side of the system of
	equations and hence properties $C_1$--$C_4$ are also satisfied for the system \eqref{spat}.
\end{proof}
\begin{remark}
	The non-negativity of solutions $S$ and $I$ in \eqref{spat} implies exponential convergence to
	zero by taking the sum of the first two equations.
	This confirms the second part of Theorem~\ref{thm:uniqueness}.
\end{remark}

Due to the complicated form of the equations in \eqref{spat}, one can suspect that no analytic solution
can be derived for this system.
Because of this, we are going to use numerical methods to approximate the solution of these equations.
However, the analytic solution of the original SIR model \eqref{sir} has been described in the papers by
Harko et al. \cite{Harko/Lobo/Mak:2014:SIR} and Miller \cite{Miller:2012:EpidemicSizes, Miller:2017:SIR}.
Thus, we can get similar results applying their observations to our modified model \eqref{spat}.
The analytic solution of system \eqref{spat} can be written as
\begin{equation}\label{anasol}
\left\{
\begin{aligned}
 S(t, x, y) &=  S(0, x, y)e^{- \phi(t,x,y) - ct}, \\
 I(t, x, y) &= M_0(x,y) - S(t, x, y) - R(t,x,y), \\
 R(t, x, y) &= R(0,x,y) +  b \int_0^t I (s,x,y) \ud s + c \int_0^t S (s,x,y) \ud s, 
\end{aligned}
\right.
\end{equation} 
where we use the notations 
\begin{align*}
M_0(x,y) &\coloneqq S(0,x,y) + I(0,x,y) + R(0,x,y), \\
\phi(t,x,y) &\coloneqq \int_{0}^{t} \mathcal{F}\bigl(I(s;x,y)\bigr) \ud s,
\end{align*}
and $\mathcal{F}$ is given by \eqref{calFdef}.

It is evident that in \eqref{anasol}, the values of the functions at a given time $t^*$ can only be
computed if the values in the interval $[0, t^*)$ are known.
Consequently, these formulas are not useful in practice, since \eqref{anasol} is an implicit system in the
solutions $S(t, x, y)$, $I(t, x, y)$ and $R(t, x, y)$.
Later (see Table~\ref{first_order_error} in \cref{subsec:Convergence}), an approximation of the
solution of \eqref{anasol} will be compared to the numerical solution of first-order forward Euler scheme.

Since the values of the functions in \eqref{anasol} cannot be calculated directly, numerical methods are
needed to approximate them. We can take two possible paths:
\begin{enumerate}
\item approximate the values of $\phi(t,x,y)$ and the integrals in the third equation of \eqref{anasol} by
	numerical integration; or
\item approximate the solution of the original equation \eqref{spat} by a numerical method.
\end{enumerate} 
The first approach is discussed in \cref{subsec:InteqralSolution}, while the rest of the paper considers
the second case.
We focus on the order and convergence rate of our numerical methods and ensure that qualitative
properties $C_1$--$C_4$ of the analytic solution are preserved by the numerical method.
For that, a discrete analogue of conditions $C_1$--$C_4$ is required; see \cref{sec:TimeIntegration}.

\subsection{Numerical approximation of the integral solution}\label{subsec:InteqralSolution}
As noted before, if we would like to use the solution \eqref{anasol} then we have to approximate the
involved integrals.
This can be achieved by partitioning the time interval $[0,\tfinal]$ into uniform spaced sections by using
a constant time step $\tau$.
With this approach, the integrals can be approximated by a left (right) Riemann sum, and thus consider the
values of densities $X(t,x,y)$, $X \in \{S, I, R\}$, at the left endpoint of each section.
Therefore, for any integer $1 \le n \le \mathcal{N}$ such that $\tfinal = \tau\mathcal{N}$, the integral
of $X(t,x,y)$ can be approximated by
\begin{align*}
	\int_0^{n\tau} X(s,x,y) \ud s \approx
	\left\{
		\begin{aligned}
			&\tau \sum_{k=0}^{n-1} X(k\tau,x,y) \quad (\text{left Riemann sum}), \\
			&\tau \sum_{k=1}^{n} X(k\tau,x,y) \quad (\text{right Riemann sum}).
		\end{aligned}\right.
\end{align*}
An important observation is that the integral equations \eqref{anasol} can be rewritten in a recursive
form
\begin{equation}\label{recurs}
	\left\{
	\begin{aligned}
		S\bigl(n\tau, x, y\bigr) &=  S((n-1)\tau, x, y)
			\exp\left(-\int_{(n-1)\tau}^{n\tau} \mathcal{F}\bigl(I(s;x,y)\bigr) \ud s - c\tau \right), \\
		R\bigl(n\tau, x, y\bigr) &= R((n-1)\tau,x,y) +  b \int_{(n-1)\tau}^{n\tau} I (s,x,y) \ud s +
			c \int_{(n-1)\tau}^{n\tau} S (s,x,y) \ud s,  \\
		I\left(n\tau, x, y\right) &= M_0(x,y) - S(n\tau, x, y) - R(n\tau,x,y).
	\end{aligned}
	\right.
\end{equation} 
Let $X^n(x, y) \approx X\left(n\tau, x, y\right)$, $X \in \{S, I, R\}$, and define
$\mathcal{F}^n \coloneqq \mathcal{F}(I^n)$.
Using the approximations
\ifjournal
	\begin{align*}
		\int_{(n-1)\tau}^{n\tau} \mathcal{F}\bigl(I(s;x,y)\bigr) \ud s \approx \tau\mathcal{F}^{n-1}, \qquad
		\int_{(n-1)\tau}^{n\tau} I (s,x,y) \ud s \approx \tau I^{n-1}, \qquad
		\int_{(n-1)\tau}^{n\tau} S (s,x,y) \ud s \approx \tau S^n,
	\end{align*}
\else
	\begin{align*}
		\int_{(n-1)\tau}^{n\tau} \mathcal{F}\bigl(I(s;x,y)\bigr) \ud s \approx \tau\mathcal{F}^{n-1},
		\int_{(n-1)\tau}^{n\tau} I (s,x,y) \ud s \approx \tau I^{n-1},
		\int_{(n-1)\tau}^{n\tau} S (s,x,y) \ud s \approx \tau S^n,
	\end{align*}
\fi
(note that the first two are left Riemann sum, while the third a right Riemann sum) we get an approximating
scheme for \eqref{anasol}, given by
\begin{subequations}\label{inteq_num}
	\begin{empheq}[left ={\empheqlbrace}]{align}
		S^n &= S^{n-1} e^{-\tau\mathcal{F}^{n-1} - c\tau}, \label{inteq_numa} \\
		R^n &= R^{n-1} + b \tau I^{n-1} + c  \tau S^n, \label{inteq_numb} \\
		I^n &= (S^{n-1} + I^{n-1} + R^{n-1}) - S^n - R^n. \label{inteq_numc}
	\end{empheq}
\end{subequations}
Note that in this case, the order of the equations in \eqref{inteq_num} is important as estimates at time
$t_n = n\tau$ are used to update the rest of solution's components.
\begin{theorem}
Consider the solution $X^n(x, y)$, $X \in \{S, I, R\}$ of scheme \eqref{inteq_num} on the time
interval $[0,\tfinal]$, where $1 \le n \le \mathcal{N}$.
Let $\mathcal{N}$ be the total number of steps such that $\tfinal = \tau\mathcal{N}$, where $\tau$ denotes
the time step.
If the step-size restriction $0 < \tau \le 1/b$ holds, then the solution of \eqref{inteq_num} satisfies
properties $C_1$--$C_4$ at times $t_n = n\tau$, $1 \le n \le \mathcal{N}$.
\end{theorem}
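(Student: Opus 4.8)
The plan is to argue by induction on the time index $n$, establishing $C_1$–$C_4$ simultaneously at each step $t_n = n\tau$. The base case $n=0$ is immediate: $C_1$ holds by the non-negativity hypothesis on the initial data, and $C_2$–$C_4$ are vacuous. For the inductive step, suppose $X^{n-1}(x,y) \ge 0$ for $X \in \{S, I, R\}$ and that conservation holds up to step $n-1$, i.e.\ $S^{n-1} + I^{n-1} + R^{n-1} = M_0(x,y)$.

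Property $C_2$ requires no step-size restriction: adding $S^n + R^n$ to both sides of \eqref{inteq_numc} gives $S^n + I^n + R^n = S^{n-1} + I^{n-1} + R^{n-1} = M_0$, so the sum is conserved and, integrating over $\Omega$, stays constant in time. For $C_3$ and the non-negativity of $S$, observe that $\mathcal{F}^{n-1} = \mathcal{F}(I^{n-1}) \ge 0$, because $I^{n-1} \ge 0$ pointwise and $g_1, g_2$ are non-negative; hence the factor $e^{-\tau\mathcal{F}^{n-1} - c\tau}$ lies in $(0,1]$, and \eqref{inteq_numa} yields $0 \le S^n \le S^{n-1}$. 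For $C_4$ and the non-negativity of $R$, every term on the right-hand side of \eqref{inteq_numb} is non-negative once we know $S^n \ge 0$, so $R^n \ge R^{n-1} \ge 0$. None of these three steps uses $\tau \le 1/b$.

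The only place the restriction enters is the non-negativity of $I^n$. I would substitute \eqref{inteq_numa} and \eqref{inteq_numb} into \eqref{inteq_numc} and collect terms to obtain
\[
I^n = (1 - b\tau)\,I^{n-1} + S^{n-1}\Bigl(1 - (1 + c\tau)\,e^{-\tau\mathcal{F}^{n-1} - c\tau}\Bigr).
\]
The first term is non-negative precisely when $\tau \le 1/b$ (using $I^{n-1} \ge 0$). The second term is non-negative because $1 + c\tau \le e^{c\tau}$ gives $(1 + c\tau)e^{-c\tau} \le 1$, while $e^{-\tau\mathcal{F}^{n-1}} \le 1$ since $\mathcal{F}^{n-1} \ge 0$; combined with $S^{n-1} \ge 0$ this forces the whole term to be non-negative. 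Hence $I^n \ge 0$, which completes $C_1$ at step $n$, and the induction closes.

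The genuinely non-routine point is the rearrangement of $I^n$ into the displayed form together with the elementary bound $(1 + c\tau)e^{-c\tau} \le 1$ that neutralizes the $S$-contribution; everything else is bookkeeping within the induction. I would also note in passing that the condition $\tau \le 1/b$ is exactly what this argument needs to keep the coefficient $1 - b\tau$ of $I^{n-1}$ non-negative, which is consistent with the way the restriction appears in the statement; the vaccination term $c$ never constrains the step size because the $S$-update is exponential rather than explicit.
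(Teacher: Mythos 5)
Your proof is correct and follows essentially the same route as the paper's: induction on $n$, conservation read off directly from \eqref{inteq_numc}, non-negativity and monotonicity of $S^n$ and $R^n$ from the sign structure of \eqref{inteq_numa}--\eqref{inteq_numb}, and the substitution $I^n = (1-b\tau)I^{n-1} + S^{n-1}\bigl(1-(1+c\tau)e^{-\tau(\mathcal{F}^{n-1}+c)}\bigr)$ isolating the condition $\tau\le 1/b$. Your bound $(1+c\tau)e^{-c\tau}\le 1$ is just the paper's inequality $x-\ln(1+x)\ge 0$ in a different guise, so there is nothing substantively different here.
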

\begin{proof}
We prove the theorem by induction.
Consider the system \eqref{inteq_num} at an arbitrary step $n$ and assume that the properties
$C_1$--$C_4$ hold for the first $n-1$ steps.
First, it is easy to see that the conservation property $C_2$ is satisfied by \eqref{inteq_numc}.
Moreover, by assumption $S^{n-1}$, $I^{n-1}$, and $R^{n-1}$ are non-negative and hence by definition
$\mathcal{F}^{n-1}$ is also non-negative.
As a result, $e^{-\tau(\mathcal{F}^{n-1} + c)} < 1 $, and therefore $S^n$ is non-negative and
monotonically decreasing.
Similarly, the right-hand side terms of \eqref{inteq_numb} are also non-negative, thus $R^n$ is
non-negative and monotonically increasing.
To show that $I^n$ is non-negative, we substitute \eqref{inteq_numa} and \eqref{inteq_numb} into
\eqref{inteq_numc} to get
\begin{align*}
	I^n = S^{n-1}\left(1 - (1+c\tau)e^{-\tau(\mathcal{F}^{n-1}+c)}\right) + I^{n-1}\left(1-b\tau\right).
\end{align*}
We have by assumption that $S^{n-1}$ and $I^{n-1}$ are non-negative; therefore if
\begin{align*}
	1 -  (1 + c\tau)e^{-\tau(\mathcal{F}^{n-1} + c)} \ge 0 \quad \text{and} \quad 1 - b\tau \ge 0,
\end{align*}
then $I^n$ is non-negative.
The function $(1 + c\tau)e^{-\tau(\mathcal{F}^{n-1} + c)}$ is monotonically decreasing for $\tau \ge 0$ (its
derivative is negative) and lies in $(0,1]$; thus $1 -  (1 + c\tau)e^{-\tau(\mathcal{F}^n + c)} \ge 0$ for
any $\tau > 0$.
As a result, the sufficient condition for $I^n$ to remain non-negative is $0 < \tau \le 1/b$.
Note that by using the same arguments as above we can show that conditions $C_1$--$C_4$ hold at the first
step, i.e., $n = 1$, assuming that the initial conditions are non-negative.
This completes the proof.
\end{proof}
%
\vspace*{0pt}
\begin{remark}
	Using left Riemann sums to approximate the integrals in \eqref{recurs} results in local errors of
	order $\mathcal{O}(\tau^2)$.
	Therefore, the solution of \eqref{recurs} can only be first order accurate.
	Notice that it is not possible to use any high order Newton-Cotes formulas since the values of
	$X\left(t, x, y\right)$, $X \in \{S, I, R\}$, are only known at discrete times $t = n\tau$.
\end{remark}
In the next two sections, we discretize \eqref{spat} by first using a numerical approximation of the
integral on the right-hand side of the system, and then applying a time integration method.
This approach results in numerical schemes that are high order accurate, both in space and time.

\section{Spatial discretization}\label{sec:Spatial}
It is evident that the key element of the numerical solution of problem \eqref{spat} is the approximation of
$\mathcal{F}\bigl(I(t,x,y)\bigr)$.
This can be done in two different ways.
The first approach is to approximate the function $I(t, \bar{x}(r,\theta), \bar{y}(r,\theta))$ by a Taylor
expansion, and then proceed further.
This method is studied in \cite{Farago/Horvath:2016:SpaceTimeEpidemic} and
\cite{Farago/Horvath:2018:QualitativeDiseasePropagation}, but is not efficient in the case of non-constant
function $g_2(\theta,x,y)$ as shown in \cite{Takacs/Horvath/Farago:2019:SpaceModelsDiseases}.
The other approach is to use a combination of interpolation and numerical integration (by using quadrature
formulas) to obtain an approximation of $\mathcal{F}\bigl(I(t,x,y)\bigr)$.

We consider two-dimensional quadrature formulas on the disc of radius $\delta$ with positive coefficients.
Denote by $\mathcal{Q}_\delta(x,y)$ the set of quadrature nodes in the disk $B_{\delta}(x,y)$
parametrized by polar coordinates (see \cite{Takacs/Horvath/Farago:2019:SpaceModelsDiseases}), i.e.,
\begin{align*}
	\mathcal{Q}_\delta(x,y) \coloneqq \left\lbrace (x_{ij},y_{ij}) =
	\bigl(x + r_i\cos(\theta_j), y + r_i\sin(\theta_j)\bigr) \in
	B_{\delta}(x,y), i \in \mathcal{I}, j \in \mathcal{J}\right\rbrace,
\end{align*}
where $r_i$ denotes the distance from center point $(x,y)$, $\theta_j$ is the angle, and $\mathcal{I}$ and
$\mathcal{J}$ are the set of indices of quadrature nodes.
Using numerical integration, we get the system
\begin{align}\label{spat_int}
\left \{
\begin{aligned}
\frac{\partial S(t, x, y)}{\partial t} &= - S(t, x, y)T\bigl(t,\mathcal{Q}_\delta(x,y)\bigr) - c S(t,x,y), \\
\frac{\partial I(t, x, y)}{\partial t} &=S(t, x, y) T\bigl(t,\mathcal{Q}_\delta(x,y)\bigr)  - bI(t, x, y), \\
\frac{\partial R(t, x, y)}{\partial t} &= bI(t, x, y) + c S(t,x,y),
\end{aligned}
\right.
\end{align}
where
\begin{align}\label{T_def}
	T\bigl(t,\mathcal{Q}_\delta(x,y)\bigr) = \sum_{(x_{ij},y_{ij}) \in \mathcal{Q}_\delta(x,y)} w_{i,j}
	g_1(r_i) g_2(\theta_j, x, y) I\bigl(t, x + r_i \cos(\theta_j), y + r_i \sin(\theta_j)\bigr),
\end{align}
and $w_{i,j} > 0$ are the weights of the quadrature formula.

\begin{remark}
	Similar arguments as those used in the proof of  Theorem~\ref{contpres} can be applied to system
	\eqref{spat_int}; hence, the properties $C_1$--$C_4$ hold without any restrictions for the analytic
	solution of this system.
	Moreover, it can be easily shown that $T(t,\mathcal{Q}_\delta(x,y))$ satisfies properties ($A_1$) and
	($A_2$), by following the proof of Lemma~\ref{lem:L2boundedness} and linearity arguments.
	As a result system \eqref{spat_int} admits a unique strong solution.
\end{remark}

\subsection{The semi-discretized system}
Now we would like to solve \eqref{spat_int} numerically. The first step is to discretize the problem in
space.
Let us suppose that we would like to solve our problem on a rectangle-shaped domain, namely
$\Omega \coloneqq [0,\mathcal{L}_1] \times [0,\mathcal{L}_2]$.
For our numerical solutions we will discretize this domain by using a spatial grid
\begin{align*}
	\mathcal{G} \coloneqq \left\lbrace(x_k,y_l) \in \Omega\,
		|\; 1\leq k \leq P_1 , 1 \leq l \leq P_2 \right\rbrace,
\end{align*}
which consists of $P_1 \times P_2$ points with spatial step sizes $h_1$ and $h_2$, and approximate the
continuous solutions by a vector of the values at the grid points.
After this semi-discretization, we get the following set of equations
\begin{align}\label{trapeq2}
	\left\{
	\begin{aligned}
	\frac{dS_{k,l}(t)}{dt} &= -S_{k,l}(t)T_{k,l}\bigl(t,\mathcal{Q}_\delta(x_k,y_l)\bigr) - c S_{k,l} (t), \\
	\frac{dI_{k,l}(t)}{dt} &= S_{k,l}(t)T_{k,l}\bigl(t,\mathcal{Q}_\delta(x_k,y_l)\bigr) - bI_{k,l}(t), \\
	\frac{dR_{k,l}(t)}{dt} &= bI_{k,l}(t) + c S_{k,l} (t),
	\end{aligned}
	\right.
\end{align}
where $X_{k,l}(t)$, $X \in \lbrace S,I,R \rbrace$, denotes the approximation of the function at grid point
$(x_k,y_l)$.
The approximation of $\mathcal{F}\bigl(I(t; x_k,y_l)\bigr)$ is denoted by
$T_{k,l}(t,\mathcal{Q}_\delta(x_k,y_l))$ and defined as
\begin{align}\label{quadrature}
\begin{aligned}
	T_{k,l}\bigl(t,&\mathcal{Q}_\delta(x_k,y_l)\bigr) \coloneqq \sum_{(\bar{x}_k,\bar{y}_l) \in
	\mathcal{Q}_\delta(x_k,y_l)} w_{i,j} g_1(r_i)g_2(\theta_j, x_k, y_l)\tilde{I}(t, \bar{x}_k, \bar{y}_l),
\end{aligned}
\end{align}
where $\bar{x}_k = x_k + r_i \cos(\theta_j)$ and $\bar{y}_l = y_l + r_i \sin(\theta_j)$.
Note that the points $(\bar{x}_k, \bar{y}_l)$ might not be included in $\mathcal{G}$; in such case there
are no $I_{k,l}$ values assigned to them.
Because of this, we approximate $I(t, \bar{x}_k, \bar{y}_l)$ by using positivity preserving interpolation
(e.g. bilinear interpolation) with the nearest known $I_{k,l}$ values and positive coefficients.
This is the reason why $\tilde{I}$ is used in \eqref{quadrature} instead of $I$.
\begin{proposition}
A unique strong solution for system \eqref{trapeq2} exists, for which properties $C_1$--$C_4$ hold locally
at a given point $(x_k,y_l)$.
\end{proposition}
\begin{proof}
The proof of existence and uniqueness comes from the Lipschitz continuity and boundness of the right-hand
side, which can be proved similarly as in Corollaries~\ref{col:A2} and \ref{col:A1}.
Properties $C_1$--$C_4$ can be proved in a similar manner as in Theorem \ref{contpres}.
\end{proof}
The next theorem characterizes the accuracy of interpolation and quadrature techniques of system
\eqref{trapeq2}.
\begin{theorem}
Suppose that a quadrature rule approximates the integral \eqref{calFdef} to order $p$, i.e.,
\begin{align}\label{quadr_norm}
\norm{\mathcal{F}\bigl(I(t;x,y)\bigr) - T\bigl(t,\mathcal{Q}(x,y)\bigr)}_{L^2} = \bigO{\delta^p},
\end{align}
where $\delta$ is the radius of the disk in which the integration takes place. Let us suppose that the
(positivity preserving) spatial interpolation $\tilde{I}$ approximates the values of $I$ to order $q$,
i.e.,
\begin{align}\label{interp_norm}
\norm{I(t,x,y) - \tilde{I}(t,x,y)}_{L^2} = \bigO{h^q},
\end{align}
where $h = \min\{h_1,h_2\}$ is the minimum of the spatial step sizes.
Then if $\tilde{u}$ is the solution of \eqref{spat} evaluated at the grid points of $\mathcal{G}$ and
$\tilde{v}$ is the solution of \eqref{trapeq2}, it follows that
\begin{align*}
\norm{\tilde{u} - \tilde{v}}_{l^2} = \bigO{\delta^p} + \bigO{h^q},
\end{align*}
where $l^2$ denotes the discrete $L^2$ norm taken with respect to the spatial variables.
\end{theorem}
\begin{proof}
It is sufficient to prove that if $\mathrm{w}$ is the solution of \eqref{spat_int} evaluated at
the grid points of $\mathcal{G}$, then
\begin{align*}
	\norm{\tilde{u}-\mathrm{w}}_{l^2} = \bigO{\delta^p} \qquad \text{and} \qquad
	\norm{\mathrm{w}-\tilde{v}}_{l^2} = \bigO{h^q}
\end{align*}
hold.
Let us rewrite the first two equations of \eqref{spat_int} in the form
\begin{equation}\label{eq:spatialorderproof1}
\dfrac{ d \mathrm{w}(t)}{dt} = - A \mathrm{w}(t) + \left( \begin{aligned} -\mathrm{w}_1\mathcal{F}_d(\mathrm{w}_2) \\ \mathrm{w}_1\mathcal{F}_d(\mathrm{w}_2) \end{aligned} \right),
\end{equation}
where
\begin{align*}
	\mathcal{F}_d(\mathrm{w}_2) = \sum_{(x_{ij},y_{ij}) \in \mathcal{Q}(x,y)} w_{i,j}
	g_1(r_i) g_2(\theta_j, x, y) \mathrm{w}_2\bigl(t, x + r_i \cos(\theta_j), y + r_i \sin(\theta_j)\bigr),
\end{align*}
is the quadrature discretization of $\mathcal{F}(\mathrm{w}_2)$.
Note that $\mathcal{F}_d(\mathrm{w}_2)$ has the same form as $T\bigl(t,\mathcal{Q}(x,y)\bigr)$ in
\eqref{T_def}, where $I(t,x,y)$ is replaced by $\mathrm{w}_2(t,x,y)$.
Then, by assumption \eqref{quadr_norm}, equation \eqref{eq:spatialorderproof1} can be rewritten as
\begin{align*}
\dfrac{ d \mathrm{w}(t)}{dt} = - A \mathrm{w}(t) + \left( \begin{aligned} -\mathrm{w}_1\bigl(\mathcal{F}(\mathrm{w}_2) + \bigO{\delta^p}\bigr) \\ \mathrm{w}_1\bigl(\mathcal{F}(\mathrm{w}_2) + \bigO{\delta^p}\bigr) \end{aligned} \right).
\end{align*}
It can be shown that Corollary \ref{col:A1} also holds for $\mathcal{F}_d$.
Therefore, by similar arguments as presented in the proof of Lemma~\ref{contdep}, the equality
$\norm{\tilde{u}-\mathrm{w}}_{l^2} = \bigO{\delta^p}$ holds.
The other estimate, $\norm{\mathrm{w}-\tilde{v}}_{l^2} = \bigO{h^q}$, can be also similarly proved by
rewriting the first two equations of \eqref{trapeq2} (as we did with \eqref{spat_int}) and using the
assumption \eqref{interp_norm}.
\end{proof}

A natural question arises: what is the best type of quadrature and interpolation for solving the system
\eqref{trapeq2}?
In the rest of the section, we describe two numerical integration procedures and also discuss suitable
interpolation techniques.

\subsubsection{Elhay--Kautsky quadrature}
One can use a direct quadrature rule on the general disk, see for example
\cite{Stroud:1971:CalculationMultipleIntegrals, Davis/Rabinowitz:2007:NumericalIntegration}.
In such case the integral of a function $f(x,y)$ over the disk with radius $\delta$ can be approximated by
\begin{align}\label{elhay_kautsky}
	Q(f) = \pi \delta^2 \sum_{i=1}^{N_r \cdot N_\theta} w_i f(x_i, y_i) =
		\pi \delta^2 \sum_{i=1}^{N_r}\sum_{j=1}^{N_\theta} \widetilde{w}_i
		f\bigl(r_i \cos(\theta_j),r_i \sin(\theta_j)\bigr),
\end{align}
where $N_r$ is the number of radial nodes, $N_\theta$ is the number of equally spaced angles, and $w_i$
and $\widetilde{w}_i$ are  weights in the $[0,1]$ interval.
We use $N_\theta = 2 N_r$ to have a quadrature rule that is equally powerful in both $r$ and $\theta$.
The weights and quadrature nodes are calculated by a modification of the Elhay--Kautsky Legendre quadrature
method \cite{Kautsky/Elhay:1982:InterpolatoryQuadratures,
Sylvan/Kautsky:1987:FORTRANInterpolatoryQuadratures, Martin/Wilkinson:1968:ImplicitQL}.
The top panel of Figure~\ref{fig:nodes} shows the distribution of quadrature nodes for
$N_r \in \{3,6,12\}$.
The Elhay--Kautsky quadrature results in nodes that are evenly spaced in the $\theta$ direction.

\subsubsection{Gauss--Legendre quadrature}
Alternatively, we can transform the disk into a square, and then use a one-dimensional Gauss-Legendre
rule to approximate the integral. First, we transform the disk with radius $\delta$ to the rectangle
$[0,\delta] \times [0, 2 \pi]$ in the $r-\theta$ plane. Next, the rectangle
$[0,\delta] \times [0, 2 \pi]$ is mapped to $[0,1] \times [0,1]$ on the $\xi-\eta$ plane by using the
linear transformation
$$r = \delta \xi, \quad \theta = 2 \pi \eta,$$
that has a Jacobian $2 \pi \delta$. Using these transformations, the original integral
\begin{align*}
\int_{0}^{\delta} \int_{0}^{2 \pi} f\bigl(r \cos(\theta), r \sin(\theta)\bigr) \, r \ud\theta \ud r
\end{align*}
takes the form
\begin{align}\label{doubleintegral}
	\int_0^1 \int_0^1 f\bigl(\delta \xi \cos(2 \pi \eta), \delta \xi \sin(2 \pi \eta)\bigr)
		\delta \xi \, 2 \pi \delta \ud\eta \ud\xi.
\end{align}
There are several approaches for computing multiple integrals based on numerical integration of
one-dimensional integrals.
In this paper, we use the Gauss--Legendre quadrature rule on the unit interval
\cite{Trefethen:2008:GaussQuadrature}; other options include generalized Gaussian quadrature rules as
described in \cite{Ma/Rokhlin/Wandzura:1996:GeneralizedQuadrature}.
The integral \eqref{doubleintegral} can be approximated by
\begin{align}\label{gaussQ}
	Q(f) = \sum_{i=1}^{N_\xi} \sum_{j=1}^{N_\eta} w_i w_j 2 \pi \delta^2 \xi_i
	f\bigl(\delta \xi_i \cos(2 \pi \eta_j),\delta \xi_i \sin(2 \pi \eta_j)\bigr) =
	\sum_{m=1}^{N_\xi \cdot N_\eta} \widetilde{w}_{m} f(x_m,y_m),
\end{align}
where $\xi_i$ and $\eta_i$ are the $i$th quadrature nodes corresponding to the Gauss--Legendre quadrature
with weights $w_i$.
The number of quadrature nodes in the $\xi$ and $\eta$ direction are denoted by $N_\xi$ and $N_\eta$,
respectively, and we let $x_m = \delta \xi_i \cos(2 \pi \eta_j)$, $y_m = \delta \xi_i \sin(2 \pi \eta_j)$
and $\widetilde{w}_{m} = w_i w_j 2 \pi \delta^2 \xi_i$.
The distribution of the quadrature nodes in the unit disk is not uniform as with the Elhay--Kautsky
quadrature and can be seen in the bottom panel of Figure~\ref{fig:nodes}.
For a fair comparison we use $N_\eta = 2 N_\xi$.
\begin{figure}[!b]
	\centering
	\subfigure[$3 \times 6$ nodes]
	{\includegraphics[width=0.3\textwidth]{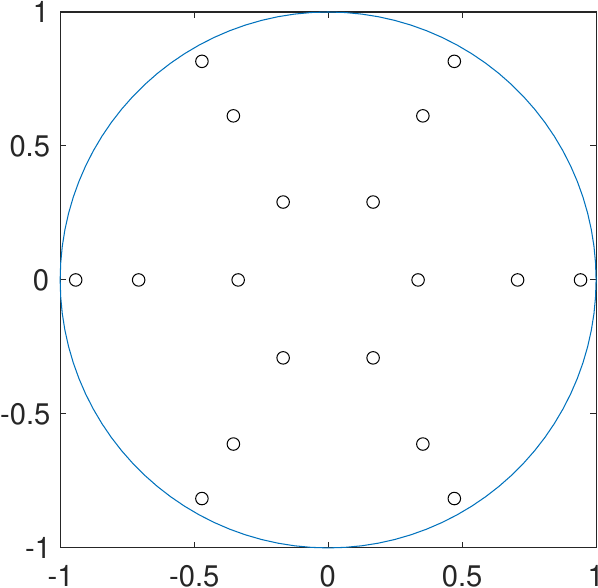}}
	\quad
	\subfigure[$6 \times 12$ nodes]
	{\includegraphics[width=0.3\textwidth]{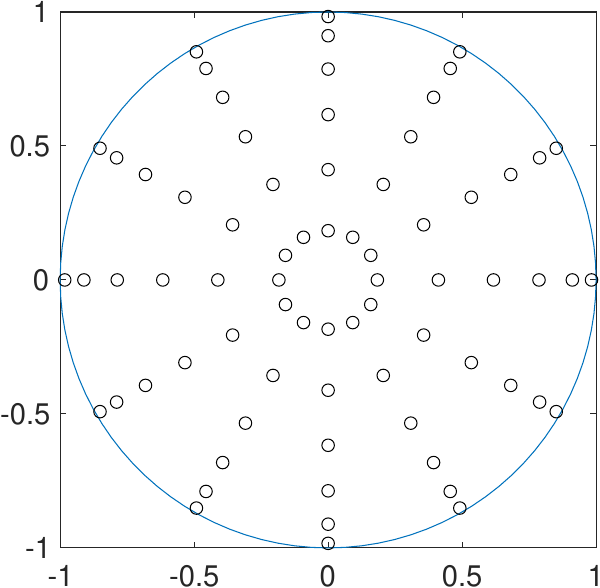}}
	\quad
	\subfigure[$12 \times 24$ nodes]
	{\includegraphics[width=0.3\textwidth]{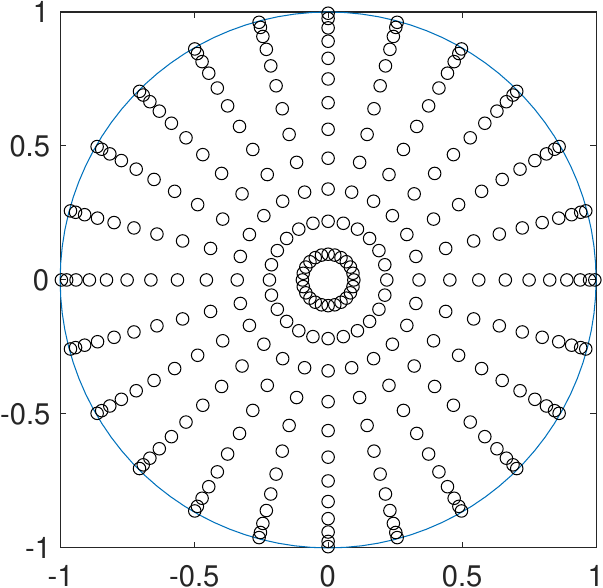}} \\[10pt]
	\subfigure[$3 \times 6$ nodes]
	{\includegraphics[width=0.3\textwidth]{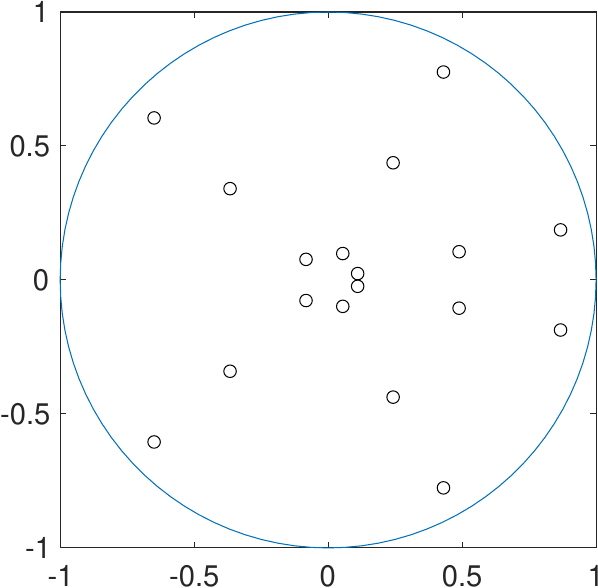}}
	\quad
	\subfigure[$6 \times 12$ nodes]
	{\includegraphics[width=0.3\textwidth]{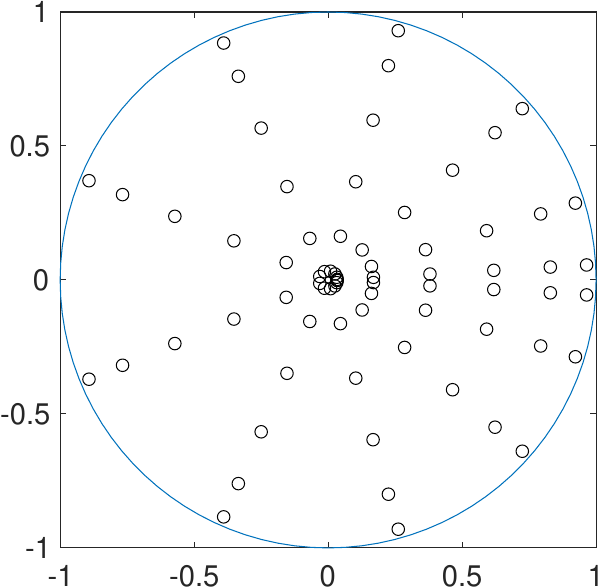}}
	\quad
	\subfigure[$12 \times 24$ nodes]
	{\includegraphics[width=0.3\textwidth]{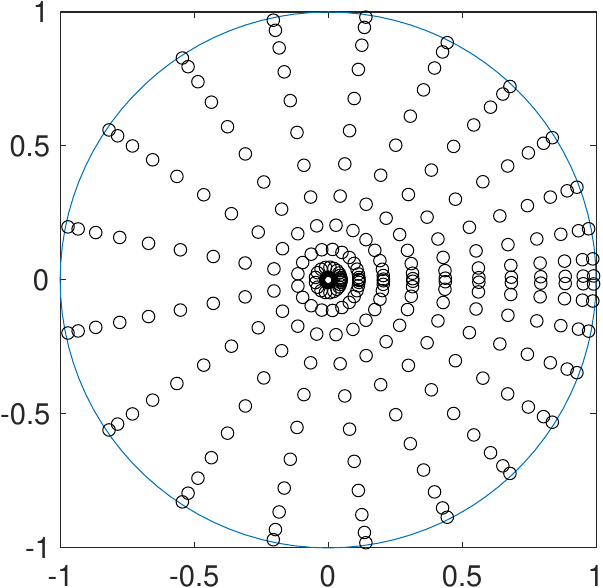}}
	\caption{\textit{Top panel}: The distribution of quadrature nodes ($N_r \times N_\theta$) in the unit
	disk using the Elhay--Kautsky quadrature rule.
	\textit{Bottom panel}: The distribution of quadrature nodes ($N_\xi \times N_\eta$) in the unit disk
	using the Gauss--Legendre quadrature rule.}
    \label{fig:nodes}
\end{figure}
Experimental results reveal that the Elhay--Kautsky quadrature \eqref{elhay_kautsky} performs better in cases
the interpolated function $f(x,y)$ is a bivariate polynomial, whereas the Gauss--Legendre quadrature
\eqref{gaussQ} or the generalized Gaussian quadrature rule (see
\cite{Ma/Rokhlin/Wandzura:1996:GeneralizedQuadrature}) when $f(x,y)$ is an arbitrary nonlinear function.

In order to determine which quadrature rule performs better for the system \eqref{trapeq2}, we perform a
convergence test by applying the quadrature formulas \eqref{elhay_kautsky} and \eqref{gaussQ} to the
function $g_1(r) g_2(\theta,x,y) I_0(r,\theta) r$, where ${g_1(r) = 100(-r + \delta)}$,
$g_2(\theta,x,y) = \sin(\theta) + 1$, and
\begin{align*}
	I_0(r,\theta)  = \frac{100}{2\pi \sigma^2}\exp\left(-\frac{r^2}{2\sigma^2}\right)
\end{align*}
is a Gaussian distribution with deviation $\sigma$ and centered at zero.
This resembles the initial conditions for $I$ at the origin, as we will use later in \cref{sec:Numerics}.
The exact solution of the integral over a disk of radius $\delta$ is given by
\begin{align}\label{ICintegral}
	\int_{0}^{\delta} \int_{0}^{2 \pi} g_1(r) g_2(\theta) I_0 \, r \ud\theta \ud r =
	5000 \left(2 \delta-\sqrt{2\pi}\,\sigma\,\text{erf}\left(\frac{\delta}{\sqrt{2}\sigma}\right)\right),
\end{align}
where $\text{erf}(x)$ is the Gauss error function \cite{Andrews:1998:SpecialFunctions,
William:2002:EconometricAnalysis}.
Figure~\ref{fig:quad_order} shows the convergence of the two quadrature rules over the disk of radius
$\delta$, as $\delta$ goes to zero ($\sigma = 1/10$).
We observe that the Gauss--Legendre quadrature \eqref{gaussQ} gives much smaller errors (close to machine
precision) when more than $12 \times 24$ nodes are used, compared to the Elhay--Kautsky quadrature
\eqref{elhay_kautsky} which is third-order accurate.
\begin{figure}[!t]
\centering
	\subfigure[Elhay--Kautsky quadrature \eqref{elhay_kautsky}]
	{\includegraphics[width=0.47\textwidth]{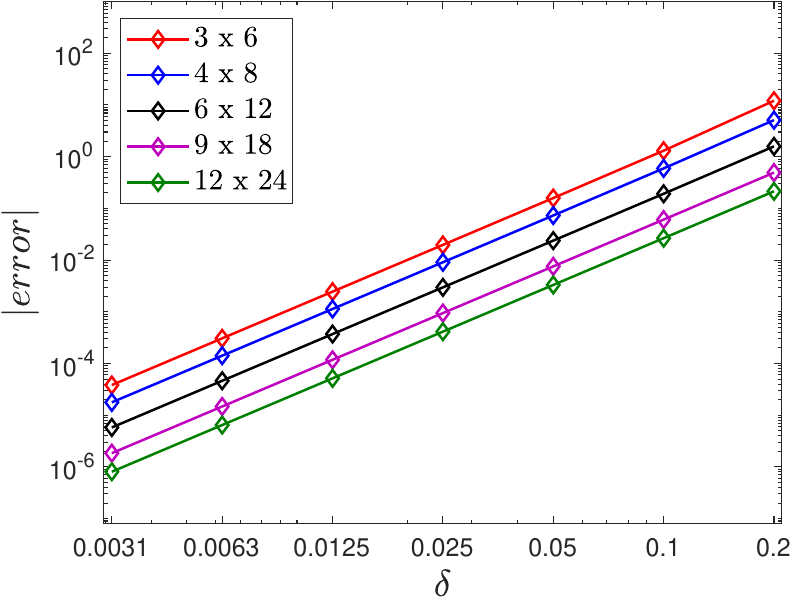}}
	\qquad
	\subfigure[Gauss--Legendre quadrature \eqref{gaussQ}]
	{\includegraphics[width=0.47\textwidth]{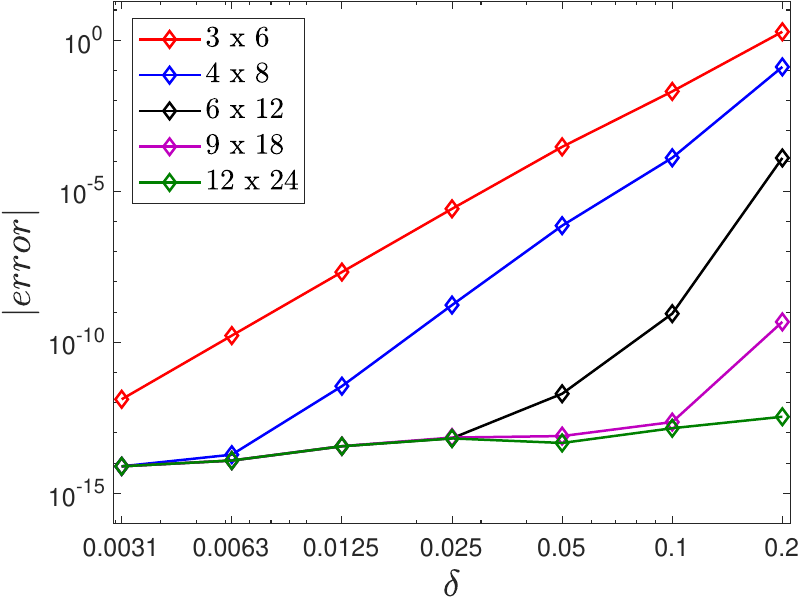}}
	\caption{Numerical integration errors of quadrature formulas \eqref{elhay_kautsky} and \eqref{gaussQ}
	applied to the integral in \eqref{ICintegral}.
	The colored curves correspond to different choices of quadrature nodes in the $\delta$-radius disk.}
    \label{fig:quad_order}
\end{figure}
The performance of the quadrature formulas depends also on the choice and accuracy of interpolation.
As mentioned before, bilinear interpolation can be used since it preserves the non-negativity of the
interpolant.
One possibility is to use higher order interpolations, like cubic or spline, but in these cases the
preservation of the required properties cannot be guaranteed.
However, numerical experiments show that piecewise cubic spline interpolation results in a positive
interpolant for a sufficiently fine spatial grid.
A better choice is the use of a shape-preserving interpolation, to ensure that negative values are not
generated and the interpolant of $I(t, \bar{x}_k, \bar{y}_l)$ in \eqref{quadrature} is bounded by
$\max_{k,l}\{S_{k,l} + I_{k,l} + R_{k,l}\}$ for every point $(x_k, y_l)$.
This can be accomplished by a monotone interpolation that uses \emph{piecewise cubic Hermite interpolating
polynomials}
\cite{Dougherty/Edelman/Hyman:1989:HermiteInterpolation, Fritsch/Carlson:1980:MonotoneInterpolation}.
In \texttt{MATLAB} (version R2021a) the relevant function is called \texttt{pchip} but is only available
for one-dimensional problems.
Extensions to bivariate shape-preserving interpolation have been studied in
\cite{Carlson/Fritsch:1985:MonotoneBicubic, Carlson:Fritsch:1989:MonotoneBicubicAlgorithm,
Fritsch/Carlson:1985:MonotoneBicubicReport}; however, this topic goes beyond the purposes of this
paper.
Another choice is the \emph{modified Akima piecewise cubic Hermite interpolation}, \texttt{makima}.
Numerical experiments demonstrate good performance as it avoids overshoots when more than two
consecutive nodes are constant \cite{Akima:1970:Interpolation, Akima:1974:BivariateInterpolation}, and
hence preserves non-negativity in areas where $I(t, \bar{x}_k, \bar{y}_l)$ is close to zero.
%



\section{Time integration methods}\label{sec:TimeIntegration}
The next step is to use time integration methods to solve the system of ordinary differential equations
\eqref{trapeq2}.
First, we study sufficient and necessary time-step restrictions such that the forward Euler method
satisfies a discrete analogue of properties $C_1$--$C_4$, denoted below by $D_1$--$D_4$.
Then, we discuss how high order SSP Runge--Kutta methods can be applied to \eqref{trapeq2}.

\sloppy{Let $X^n = \lbrace X^n_{k,l} \rbrace$, $X \in \lbrace S,I,R \rbrace$, be the numerical approximation
of $X_{k,l}(t_n)$ for all ${1 \le k \le P_1}$, $1 \le l \le P_2$, and $0 \le n \le \mathcal{N}$, where
$\mathcal{N}$ is the total number of steps.
The numerical solution should satisfy the following properties:
\begin{enumerate}
	\item[$D_1$:] The densities $\lbrace X^n_{k,l} \rbrace$, $X \in \{S, I, R\}$, are non-negative
		for every $ 1 \le k \le P_1$ , ${1 \le l \le P_2}$, and for all $0 \le n \le \mathcal{N}$.
	\item[$D_2$:] The sum $S^n_{k,l} + I^n_{k,l} + R^n_{k,l}$ is constant for all
		$0 \le n \le \mathcal{N}$ and for every $ 1 \le k \le P_1$, $1 \le l \le P_2$.
	\item[$D_3$:] The density $S^n_{k,l}$ is non-increasing, i.e., $S^n_{k,l} \le S^{n-1}_{k,l}$ for every
		$1 \le k \le P_1$, $1 \le l \le P_2$, and for all $1 \le n \le \mathcal{N}$.
	\item[$D_4$:] The density $R^n_{k,l}$ is non-decreasing i.e., $R^n_{k,l} \ge R^{n-1}_{k,l}$ for every
		$1 \le k \le P_1$, $1 \le l \le P_2$, and for all $1 \le n \le \mathcal{N}$.
\end{enumerate}}

\subsection{Explicit Euler scheme and qualitative properties}\label{subsec:FE}
Let us apply the explicit Euler method to the system \eqref{trapeq2} on the interval $[0, \tfinal]$, and
choose an adaptive time step $\tau_n > 0$ such that $t_n = t_{n-1} + \tau_n$, $n \ge 1$.
After the full discretization we get the set of algebraic equalities
\begin{subequations}\label{feuler}
	\begin{empheq}[left ={\empheqlbrace}]{align}
		S^n &= S^{n-1} - \tau_n S^{n-1} \circ T^{n-1} -  c  \tau_n S^{n-1}, \label{feulera} \\
		I^n &= I^{n-1} + \tau_n S^{n-1} \circ T^{n-1} -  b \tau_n I^{n-1}, \label{feulerb} \\
		R^n &= R^{n-1} + b \tau_n I^{n-1} + c  \tau_n S^{n-1}. \label{feulerc}
	\end{empheq}
\end{subequations}
Here, the operator $\circ$ denotes the element-by-element or Hadamard product of matrices.
The matrix $T^{n-1}$ is an approximation of \eqref{quadrature} at all points $(x_k, y_l) \in \mathcal{G}$,
and its components can be expressed by
\begin{align}\label{T_kl}
	T_{k,l}^{n-1} = \sum_{(\bar{x}_k, \bar{y}_l) \in \mathcal{Q}_\delta(x_k,y_l)}
		w_{i,j} g_1(r_i) g_2(\theta_j,x_k, y_l) \tilde{I}^{n-1}(\bar{x}_k, \bar{y}_l).
\end{align}

Now we examine the bounds of time step $\tau_n$ such that the method \eqref{feuler} gives solutions which
are qualitatively adequate and satisfy conditions $D_1$--$D_4$.
\begin{theorem}
Consider the numerical solution \eqref{feuler} obtained by the forward Euler method applied to
\eqref{trapeq2} with non-negative initial data.
Then, the solution satisfies property $D_2$ without any step-size restrictions.
Moreover, properties $D_1$, $D_3$ and $D_4$ hold if the time step satisfies
\begin{align}\label{condtheor3}
	\tau_n \le \min \left\lbrace \dfrac{1}{\max_{k,l}\{T_{k,l}^{n-1}\} + c},
		\dfrac{1}{b} \right\rbrace.
\end{align}
\end{theorem}
\begin{proof}
The proof is similar to the one of \cite[Theorem~2]{Takacs/Horvath/Farago:2019:SpaceModelsDiseases}.
We prove the statement by induction on the number of steps.

First, assume that the properties $D_1$--$D_4$ hold up to step $n-1$; we will prove that they also hold
true for step $n$.
Property $D_2$ can be easily verified by adding all equations in \eqref{feuler}.
To show the monotonicity and non-negativity of $S^n$, consider \eqref{feulera} at point
$(x_k, y_l) \in \mathcal{G}$
\begin{align*}
S_{k,l}^n = \bigl(1 - \tau_n(T_{k,l}^{n-1} + c)\bigr) S_{k,l}^{n-1}.
\end{align*}
By our assumption $I_{k,l}^{n-1} \ge 0$, and a positivity-preserving interpolation guarantees that the
interpolated values
$\tilde{I}^{n-1}(\bar{x}_k,\bar{y}_l) =
\tilde{I}^{n-1}\bigl(x_k + r_i\cos(\theta_j), y_l + r_i\sin(\theta_j)\bigr)$
are non-negative.
Therefore, by \eqref{T_kl} we get $T_{k,l}^{n-1} \ge 0$ for each
$1 \leq k \leq P_1$ , $1 \leq l \leq P_2$ since the weights $w_{i,j}$ are positive, and functions
$g_1$ and $g_2$ are non-negative.
As a result, $\tau_n(T_{k,l}^{n-1} + c) \ge 0$ and thus $S_{k,l}^n \le S_{k,l}^{n-1}$.
Moreover, if $\tau_n \le 1/(T_{k,l}^{n-1} + c)$ then $S_{k,l}^n$ remains non-negative.
Equation \eqref{feulerb} yields
\begin{align*}
I_{k,l}^n = (1-b\tau_n)I_{k,l}^{n-1} + \tau_n S_{k,l}^{n-1}  T_{k,l}^{n-1},
\end{align*}
and hence $I^n$ is non-negative if $\tau_n \le 1/b$.
Finally from \eqref{feulerc} we have
\begin{align*}
R_{k,l}^n = R_{k,l}^{n-1} + b\tau_n I_{k,l}^{n-1} + c \tau_n S_{k,l}^{n-1},
\end{align*}
therefore $R^n$ is non-negative and $R^n \ge R^{n-1}$.
Putting all together we conclude that properties $D_1$--$D_4$ are satisfied if the time step is bounded by
\eqref{condtheor3}.
By using the above argument it can be shown that $D_1$--$D_4$ also hold at the first step, $n = 1$, if the
initial data are non-negative and the time step satisfies \eqref{condtheor3}.
\end{proof}

A drawback of the time-step restriction \eqref{condtheor3} is that it depends on the solution at the
previous step.
This has important complications for higher order methods as we will see in \cref{subsec:RK}.
For any multistage method, the adaptive time step bound \eqref{condtheor3} depends not only on
the previous solution but also on the internal stage approximations.
Consequently, an adaptive time-step restriction based on \eqref{condtheor3} cannot be the same for all
stages of a Runge--Kutta method; instead it needs to be recalculated at every stage to guarantee that
conditions $D_1$--$D_4$ hold.
Therefore, such bound has no practical use because it is prone to rejected steps and will likely tend to
zero.

A remedy is to use a constant time step that is less strict than \eqref{condtheor3}, but still guarantee
${\tau \leq 1/(T_{k,l}^{n-1} + c)}$ holds for all $ 1\leq k \leq P_1$, $1 \leq l \leq P_2$ and at
every step $n$.
At a given point $(x_k,y_l) \in \mathcal{G}$ the weights and quadrature nodes in
$B_{\delta}(x_k,y_l)$ are the same regardless of the location of $(x_k,y_l)$ in the domain.
Therefore, we can find an upper bound for each element of the matrix $T^{n-1}$ in \eqref{T_kl}.
Let
\begin{align}\label{hatT}
	\widehat{T} \coloneqq \sum_{(\bar{x}_k,\bar{y}_l) \in \mathcal{Q}_\delta(x_k,y_l)}w_{i,j}
		g_1(r_i) \kappa_2 M_0 ,
\end{align}
where
\begin{align}\label{tildem}
	M_0 = \max_{(x_k,y_l) \in \mathcal{G}} \left\lbrace S(0,x_k,y_l) + I(0,x_k,y_l) + R(0,x_k,y_l)
		\right\rbrace,
\end{align}
and $\kappa_2$ was defined before. Since $T_{k,l}^{n-1} \le \widehat{T}$ for all $ 1\leq k \leq P_1$,
$1 \leq l \leq P_2$ then if
\begin{align}\label{improvedstepsize}
	\widehat{\tau} \coloneqq \min \left\lbrace \dfrac{1}{\widehat{T}+ c}, \dfrac{1}{b} \right\rbrace,
\end{align}
the condition
\begin{align*}
	\widehat{\tau} \le \min \left\lbrace \dfrac{1}{\max_{k,l}\{T_{k,l}^{n-1}\} + c},
		\dfrac{1}{b} \right\rbrace
\end{align*}
holds at every step $n$.
Moreover, $\widehat{T} \le  \widetilde{w}\, \kappa^2 M_0 N$, where
\begin{align*}
	\kappa = \max\{\kappa_1, \kappa_2\} = \max\left\lbrace\max_{r \in (0, \delta)} \{g_1(r)\},
		\max_{\substack{\theta \in [0, 2 \pi) \\ (x,y) \in \Omega}} \{g_2(\theta,x,y)\}\right\rbrace,
\end{align*}
$\widetilde{w} = \max_{i,j}\{w_{i,j}\}$, and $N$ is the number of the quadrature nodes in
$\mathcal{Q}_\delta(x_k,y_l)$.
Hence, the time step \eqref{improvedstepsize} is larger than the rather pessimistic time step
\begin{align}\label{pessimisticstepsize}
	\widetilde{\tau} \coloneqq \min\left\lbrace\dfrac{1}{\widetilde{w}\,\kappa^2M_0 N+c},
	\dfrac{1}{b}\right\rbrace,
\end{align}
proposed in \cite[Theorem~2]{Takacs/Horvath/Farago:2019:SpaceModelsDiseases}.
Numerical experiments show that $\widehat{\tau}$ is very close to the theoretical bound in
\eqref{condtheor3}, and thus a relatively small increase of time step beyond the bound
\eqref{improvedstepsize}
may produce qualitatively bad solutions which violate one of the conditions $D_1$--$D_4$ (see
\cref{subsec:Eulercomp}).

\subsection{SSP Runge--Kutta methods}\label{subsec:RK}
The forward Euler method is only first-order accurate; hence, we would like to obtain time-step restrictions
for higher order Runge--Kutta methods.
Note that the spatial discretizations discussed in \cref{sec:Spatial} can be chosen so that errors
from quadrature formulas and interpolation are very small; therefore, it is substantial to have a high-order
accurate time integration method.

Consider a Runge--Kutta method in the Butcher form \cite{Butcher:2016:ODEbook} with coefficients
\sloppy{${(a_{ij}) \in \R^{m \times m}}$} and $\bm{b} \in \R^m$.
Let $\mathcal{K}$ be the matrix given by
\begin{align*}
	\mathcal{K} = \left[\begin{array}{cc}
			(a_{ij}) & 0 \\
			\bm{b}^\intercal & 0
		  \end{array}\right],
\end{align*}
and denote by $I$ the $(m+1)$-dimensional identity matrix.
If there exists $r > 0$ such that $(I + r \mathcal{K})$ is invertible, then the Runge--Kutta method can be
expressed in the canonical Shu--Osher form
\begin{align}\label{Shu-OsherRK}
	\begin{split}
	Q^{(i)} &= v_iQ^{n-1}+\sum_{j=1}^m\alpha_{ij}\left(Q^{(j)}+\frac{\tau}{r}F\left(Q^{(j)}\right)\right),
    \qquad 1 \le i \le m+1, \\
    Q^n &= Q^{(m+1)},
	\end{split}
\end{align}
where the coefficient arrays $(\alpha_{ij})$ and $(v_{i})$ have non-negative components.
Such methods are called \emph{strong-stability preserving} (SSP) Runge--Kutta methods and have been
introduced by Shu as total-variation diminishing (TVD) discretizations \cite{Shu:1988:TVD}, and by Shu and
Osher in relation to high order spatial discretizations \cite{Shu/Osher:1988:ENO,Shu:1998:ENO-WENO}.
The choice of parameter $r$ gives rise to different Shu--Osher representations; thus we denote the
Shu--Osher coefficients of \eqref{Shu-OsherRK} by $\balpha_r = (\alpha_{ij})$ and $\bv_r = (v_{i})$ to
emphasize the dependence on the parameter $r$.
The Shu--Osher representation with the largest value of $r$ such that $(I + r \mathcal{K})^{-1}$ exists
and $\balpha_r$, $v_r$ have non-negative components is called optimal and attains the SSP coefficient
\begin{align*}
	\sspcoef = \max\left\lbrace r \ge 0 \;|\; \exists \; (I+r\mathcal{K})^{-1} \text{ and }
		\balpha_r \ge 0, \bv_r \ge 0 \right\rbrace.
\end{align*}
The interested reader may consult \cite{Gottlieb/Ketcheson/Shu:2009:HighOrderSSP,
Gottlieb/Shu:1998:TVDRK, Gottlieb/Shu/Tadmor:2001:SSP}, as well as the monograph
\cite{Gottlieb/Ketcheson/Shu:2011:SSPbook} and the references within, for a throughout review of SSP
methods.

We would like to investigate time-step restrictions such that the numerical solution obtained by applying
method \eqref{Shu-OsherRK} to the problem \eqref{trapeq2} satisfies properties $D_1$--$D_4$.
The following theorem provides the theoretical upper bound for the time step such that these properties
are satisfied.
\begin{theorem}
Consider the numerical solution obtained by applying an explicit Runge--Kutta method \eqref{Shu-OsherRK}
with SSP coefficient $\sspcoef > 0$ to the semi-discrete problem \eqref{trapeq2} with non-negative initial
data.
Then property $D_2$ holds without any time-step restrictions.
Moreover, the properties $D_1$, $D_3$ and $D_4$ hold if the time step satisfies
\begin{align}\label{RK_bound}
	\tau \leq \sspcoef \min \left\lbrace \dfrac{1}{\widehat{T} + c}, \dfrac{1}{b} \right\rbrace,
\end{align}
where $\widehat{T} $ is given by \eqref{hatT}.
\end{theorem}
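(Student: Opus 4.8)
The plan is to build the Runge--Kutta step out of forward Euler steps and reuse the one-step estimate established in Theorem~\ref{theor3}. In the canonical Shu--Osher form \eqref{Shu-OsherRK} with $r=\sspcoef$, every internal stage is a convex combination $Q^{(i)}=v_iQ^{n-1}+\sum_{j}\alpha_{ij}\bigl(Q^{(j)}+\tfrac{\tau}{\sspcoef}F(Q^{(j)})\bigr)$, where $v_i\ge 0$, $\alpha_{ij}\ge 0$ (for an explicit method only indices $j<i$ occur), and consistency forces $v_i+\sum_j\alpha_{ij}=1$. Hence each stage is an average of $Q^{n-1}$ and of forward Euler updates of the earlier stages taken with the reduced step $\tau/\sspcoef$. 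The idea is then: if that reduced Euler step maps an admissible state to an admissible state, convexity of the constraint $D_1$ (and, with a small extra argument, $D_3$ and $D_4$) propagates admissibility through all stages, hence to $Q^n=Q^{(m+1)}$. The quantitative input is the one-step estimate inside the proof of Theorem~\ref{theor3}: an Euler step of length $\Delta$ from a nonnegative state preserves $D_1$, conserves the pointwise total, and makes the $S$-component nonincreasing and the $R$-component nondecreasing, provided $\Delta\le 1/(T_{k,l}+c)$ for all $k,l$ and $\Delta\le 1/b$, with $T_{k,l}$ the cubature sum \eqref{T_kl} formed from the interpolated infectious density of that state.

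I would then argue by a double induction. The outer induction is on the time level $n$: assuming $Q^{n-1}$ satisfies $D_1$--$D_4$, properties $D_1$ and $D_2$ give that the pointwise sum $S^{n-1}_{k,l}+I^{n-1}_{k,l}+R^{n-1}_{k,l}$ equals its value at $t=0$, which is at most $\widetilde{m}$ by definition of $\widetilde{m}$. The inner induction is on the stage index $i$, with claim: $Q^{(i)}$ is nonnegative, its pointwise sum equals that of $Q^{n-1}$, and $S^{(i)}\le S^{n-1}$, $R^{(i)}\ge R^{n-1}$ componentwise. The base case $i=1$ is trivial since $Q^{(1)}=Q^{n-1}$. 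For the step, nonnegativity and conservation of $Q^{(j)}$ for $j<i$ force its pointwise sum to be $\le\widetilde{m}$, hence $I^{(j)}\le\widetilde{m}$, and since the spatial interpolation used in \cref{sec:Spatial} keeps its interpolant bounded by the nodal maximum, also $\tilde{I}^{(j)}\le\widetilde{m}$; comparing \eqref{T_kl} with \eqref{tildeT} then yields $T^{(j)}_{k,l}\le\widetilde{T}_{k,l}$ for all $k,l$. Since \eqref{RK_bound} reads $\tau/\sspcoef\le\min\{1/(\widetilde{T}+c),\,1/b\}$, we get $\tau/\sspcoef\le 1/(T^{(j)}_{k,l}+c)$ for all $k,l$ and $\tau/\sspcoef\le 1/b$, so the one-step estimate applies to the Euler update of $Q^{(j)}$ with step $\tau/\sspcoef$: that update is nonnegative, conserves the pointwise sum, and has $S$-component $\le S^{(j)}\le S^{n-1}$ and $R$-component $\ge R^{(j)}\ge R^{n-1}$. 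Forming $Q^{(i)}$ as the convex combination above and using $v_i+\sum_j\alpha_{ij}=1$ preserves nonnegativity and the pointwise sum and propagates the two inequalities, closing the inner induction. Taking $i=m+1$ yields $D_1$, $D_3$, $D_4$ for $Q^n$; the case $n=1$ is identical, starting from the nonnegative initial data.

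Property $D_2$ should be treated separately and costs no step-size restriction: the three components of the right-hand side of \eqref{trapeq2} sum to zero at every grid point, so $F$ preserves the affine set $\{S+I+R=\mathrm{const}\}$ pointwise; since every Runge--Kutta method preserves linear invariants (equivalently, $D_2$ survives the convex-combination argument above for any $\Delta$), it holds unconditionally.

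The hard part is not the SSP bootstrapping itself but the bookkeeping that keeps the Euler step-size bound legitimate at \emph{every} stage: one must guarantee that the interpolated infectious density feeding $T^{(j)}_{k,l}$ never exceeds $\widetilde{m}$, which is precisely what lets us replace the state-dependent $T^{(j)}_{k,l}$ by the fixed matrix $\widetilde{T}$ and invoke \eqref{RK_bound}. This forces nonnegativity and conservation to be carried through the intermediate stages in tandem, and uses the boundedness of the interpolation; without it, a step satisfying \eqref{RK_bound} could still violate the stagewise restriction. A secondary subtlety is that $D_3$ and $D_4$ compare $Q^n$ with $Q^{n-1}$ rather than asserting membership in a fixed convex set, so the textbook SSP principle does not apply verbatim; the refinement in the inner induction that compares each stage directly with $Q^{n-1}$ is what handles them.
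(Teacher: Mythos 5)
Your proposal is correct and follows essentially the same route as the paper: the Shu--Osher convex-combination decomposition into forward Euler steps of size $\tau/\sspcoef$, stagewise induction with the bound $T^{(j)}\le\widetilde{T}$ obtained from conservation plus boundedness of the interpolation, linear-invariant preservation for $D_2$, and direct comparison of each stage with $Q^{n-1}$ for $D_3$ and $D_4$. Your propagation of $S^{(j)}\le S^{n-1}$ through the inner induction is in fact a slightly cleaner version of the paper's argument for $D_3$, and your explicit justification of $I^{(j)}\le\widetilde{m}$ via stagewise conservation makes precise a step the paper states only briefly.
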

\begin{proof}
	Consider an arbitrary stage $i$, $1 \le i \le m+1$, of a Runge--Kutta method \eqref{Shu-OsherRK} with
	non-negative coefficients and SSP coefficient $\sspcoef > 0$.
	Applying the method to \eqref{trapeq2} we get
	\begin{subequations}
		\begin{align}
			S^{(i)} &= v_i S^{n-1} + \sum_{j=1}^{i-1} \alpha_{ij}\left(S^{(j)} -
			\frac{\tau}{\sspcoef}\left(S^{(j)} \circ T^{(j)} -c S^{(j)}\right)\right),\label{ith_stageS}\\
			I^{(i)} &= v_i I^{n-1} + \sum_{j=1}^{i-1} \alpha_{ij}\left(I^{(j)} +
			\frac{\tau}{\sspcoef}\left(S^{(j)} \circ T^{(j)} -b I^{(j)}\right)\right),\label{ith_stageI}\\
			R^{(i)} &= v_i R^{n-1} + \sum_{j=1}^{i-1} \alpha_{ij}\left(R^{(j)} +
			\frac{\tau}{\sspcoef}\left(b I^{(j)} + c S^{(j)}\right)\right). \label{ith_stageR}
		\end{align}
	\end{subequations}
	Since all Runge--Kutta methods preserve linear invariants the property $D_2$, i.e.,
	\begin{align*}
		S^n + I^n + R^n = S^{n-1} + I^{n-1} + R^{n-1}, \quad \forall n
	\end{align*}
	is trivially satisfied.

	The remainder of the proof deals with properties $D_1$, $D_3$ and $D_4$.
	We show that all quantities $S^n, I^n, R^n$ remain non-negative, while $S^n$ is non-increasing and
	$R^n$ is increasing.
	From \eqref{ith_stageS} and \eqref{ith_stageI} we have, respectively,
	\begin{align*}
		S^{(i)} &= v_i S^{n-1} + \sum_{j=1}^{i-1} \alpha_{ij}S^{(j)} \circ \left(\mathbf{1} -
			\frac{\tau}{\sspcoef}\left(T^{(j)} + c \mathbf{1} \right)\right), \\
		I^{(i)} &= v_i I^{n-1} + \frac{\tau}{r}\sum_{j=1}^{i-1} \alpha_{ij}S^{(j)} \circ T^{(j)} +
			\left(1 - \frac{\tau}{r}b\right)\sum_{j=1}^{i-1} \alpha_{ij} I^{(j)},
	\end{align*}
	where $\mathbf{1}$ is the $P_1 \times P_2$ all-ones matrix.

	By definition,
	\begin{align*}
		T_{k,l}^{(i)} = \sum_{(\bar{x}_k,\bar{y}_l) \in \mathcal{Q}_\delta(x_k,y_l)}
			w_{i,j} g_1(r_i) g_2(\theta_j, x_k, y_l) \tilde{I}^{(i)}(\bar{x}_k, \bar{y}_l),
			\qquad 1 \le i \le m+1,
	\end{align*}
	where $\tilde{I}^{(i)}$ are interpolated values.
	Since the initial data are non-negative and the chosen interpolation is positivity-preserving, we have
	that $S^{(1)} = S^{n-1}$, $I^{(1)} = I^{n-1}$ and $T^{(1)}$ are all non-negative.
	If
	\begin{align}\label{boundsA}
		 0 \le 1 - \dfrac{\tau}{r}b, \text{ and }
		 0 \le \mathbf{1} - \frac{\tau}{\sspcoef}\left(T^{(j)}+c \mathbf{1} \right) \text{ for }
		 1 \le j \le i-1,
	\end{align}
	then the explicit Runge--Kutta method inductively results in non-negative $T^{(i)}$, $S^{(i)}$, and
	$I^{(i)}$ for each $2 \le i \le m+1$.
	Since $\tilde{I}^{(i)}(\bar{x}_k, \bar{y}_l)$ is an interpolated value of $I^{(i)}(\bar{x}_k, \bar{y}_l)$
	it is bounded by $M_0$ (see \eqref{tildem}).
	Then by using \eqref{hatT}, it holds that $T^{(i)}_{k,l} \le \widehat{T}$, for $1 \le i \le m+1$ and for
	every $(x_k,y_l) \in \mathcal{G}$.
	Therefore,
	\begin{align}\label{boundsB}
		T^{(i)} \le \widehat{T}\mathbf{1}, \quad 1 \le i \le m+1.
	\end{align}
	Moreover, the non-negativity of $T^{(i)}$ implies that
	\begin{align*}
		\mathbf{1} - \frac{\tau}{\sspcoef}\left(T^{(i)}+c \mathbf{1}\right) \le 1, \quad 1 \le i \le m+1,
	\end{align*}
	and thus \eqref{ith_stageS} yields $S^{(i)} \le v_i S^{n-1} + \sum_{j=1}^{i-s} \alpha_{ij}S^{(j)}$.
	Consistency requires that $v_i + \sum_{j=1}^{i-1} \alpha_{ij} = 1$ for each $1 \le i \le m+1$ and
	hence
	\begin{align}\label{S(i)}
		\begin{split}
			S^{(i)} &\le (1-\sum_{j=1}^{i-1} \alpha_{ij})S^{n-1} + \sum_{j=1}^{i-1} \alpha_{ij}S^{(j)} \\
					 &\le S^n - \sum_{j=1}^{i-1} \alpha_{ij}\left(S^{n-1} - S^{(j)}\right).
		\end{split}
	\end{align}	
	Let $1 \le q \le m+1$ be the stage index such that $S^{(i)} \le S^{(q)}$ for all $1 \le i \le m+1$.
	Then, taking $i = q$ in \eqref{S(i)} yields
	\begin{align*}
		S^{(q)} &\le v_i S^{n-1} + \sum_{j=1}^{i-1} \alpha_{qj}S^{(q)} \\
		\left(1 - \sum_{j=1}^{i-1} \alpha_{qj}\right)S^{(q)} &\le
			\left(1 - \sum_{j=1}^{i-1} \alpha_{qj}\right)S^{n-1} \\
		S^{(q)} &\le S^{n-1}.
	\end{align*}
	Therefore, $S^{(i)} \le S^{n-1}$ for all $1 \le i \le m+1$.
	In particular for $i = m+1$ we have ${S^n = S^{(m+1)} \le S^{n-1}}$.
	
	Finally, the non-negativity of initial data, $S^{(j)}$ and $I^{(j)}$ implies that from
	\eqref{ith_stageR} we have $R^{(i)} \ge R^{n-1}$ for all $1 \le i \le m+1$, and hence
	$R^n = R^{(m+1)} \ge R^{n-1}$.
	
	Combining \eqref{boundsA} and \eqref{boundsB} we conclude that the step-size restriction
	\eqref{RK_bound} is sufficient for satisfying properties $D_1$--$D_4$.
\end{proof}


\section{Numerical experiments}\label{sec:Numerics}
In this section, we confirm the results proved previously by using several numerical experiments.
Computational tests are defined in a bounded domain and thus the choice of boundary conditions is
important.
Because we have no diffusion in our problem, we consider homogeneous Dirichlet conditions and we assume
that there is no susceptible population outside of the domain.
This means that we are going to assign a zero value to any point which lies outside of the rectangular
domain in which the problem is defined.
In most cases, the nodes of the quadrature rules \eqref{elhay_kautsky} and \eqref{gaussQ} do not belong
to the spatial grid.
Special attention must be given to the corners and boundaries of the domain where quadrature nodes,
assigned to grid points near the boundary, lie outside of the domain.
In order to handle solution estimates at corners and the boundary of the domain, we use ghost cells
which are set to zero
Thus, we can calculate the values corresponding to the quadrature nodes lying outside of the domain without
violating the qualitative properties.
All code to generate the figures and tables discussed in this section is available at
\url{https://github.com/hadjimy/spatial-SIR_RR}.

For the numerical experiments we are choosing the following functions.
Let $g_1(r)$ be a linearly decreasing function, which takes its maximum at $r=0$ and becomes zero at
$r=\delta$, i.e.,
\begin{align*}
	g_1(r)\coloneqq a(-r + \delta),
\end{align*}
where $a$ is the same parameter as in \eqref{sir}.
The $g_2(\theta,x_k, y_l)$ function is given by
\begin{align}\label{eq:g2}
	g_2(\theta,x_k, y_l) \coloneqq \beta_{k,l} \left(\sin\left(\frac{\pi}{2} + \theta - \alpha_{k,l}\right) +
		\beta_0\right),
\end{align}
where $\alpha_{k,l}$ describes the wind's direction at point $(x_k,y_l)$ and $\beta_{k,l}$ is the strength of
the wind.
The parameter $\beta_0$ is set to $11/10$ to ensure that $g_2(\theta, x_k, y_l)$ is strictly positive.
We use a differentiable velocity field to resemble a wind profile on the domain
$\Omega = [0,\mathcal{L}_1] \times [0,\mathcal{L}_2]$, and hence at each grid point $(k,l)$ the wind
direction is given by a vector $\bm{\upsilon_{k,l}} = (\upsilon^1_{k,l},\upsilon^2_{k,l})$, (see
Figure~\ref{sir_plot}(c)).
The parameter $\alpha_{k,l}$ denotes the angle of the wind vector $\bm{\upsilon_{k,l}}$ with the positive
$x$-axis, and $\beta_{k,l}$ is calculated by the $L^2$-norm of $\bm{\upsilon_{k,l}}$.

\begin{figure}[t!]
	\centering
	\ifarxiv\vspace{-28pt}\fi
	\subfigure[$t = 50$]
	{\includegraphics[width=0.6\textwidth]{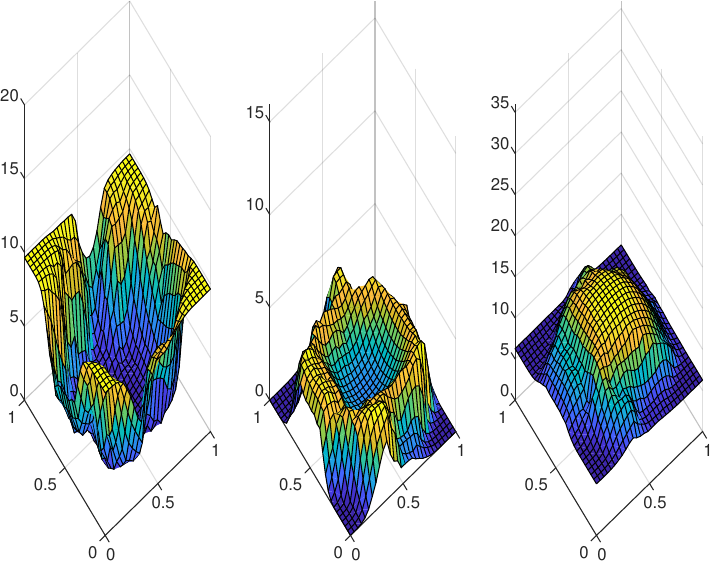}}
	\subfigure[$t = 1000$]
	{\includegraphics[width=0.48\textwidth]{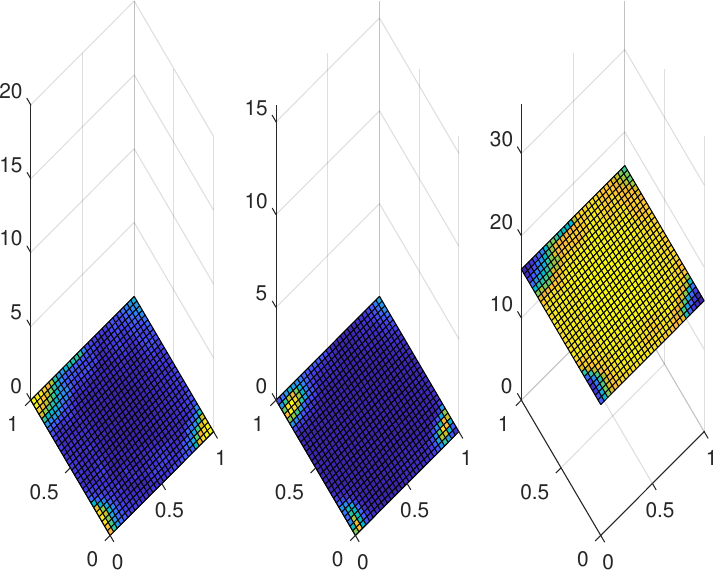}}
	\quad
	\subfigure[wind profile; the colors indicate the intensity of the wind]
	{\includegraphics[width=0.48\textwidth]{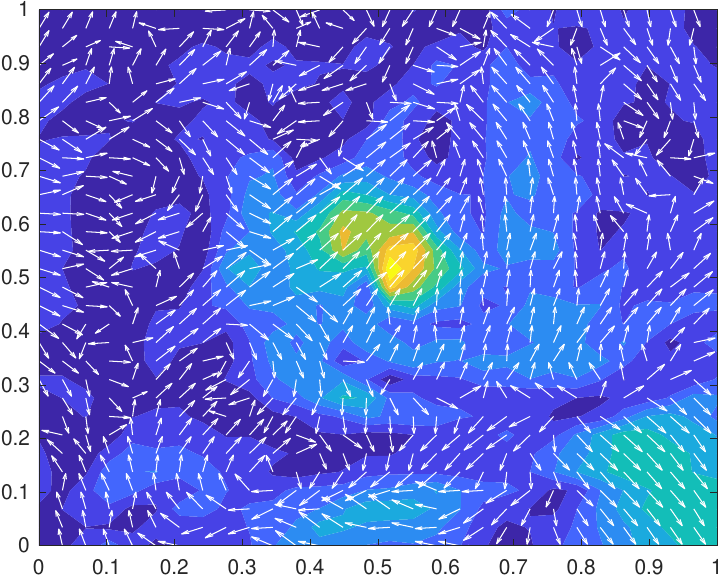}}
	\caption{The number of susceptibles $S$ (left), infected $I$ (middle) and recovered $R$ (right) at times
	$t = 50$ (top panel) and $t = 1000$ (bottom panel).
	The Gauss--Legendre quadrature \eqref{gaussQ} has been used combined with the makima interpolation.
	Plot (c) shows the wind velocity field used.}
	\label{sir_plot}
\end{figure}

The initial conditions resemble the eruption of a wildfire, i.e., having infected cases located in a small
area.
For the infected species, we use a Gaussian distribution concentrated at the middle point
$\left(\mathcal{L}_1/2,\mathcal{L}_2/2\right)$ of the domain $\Omega$, with standard deviation
$\sigma = \min\{\mathcal{L}_1,\mathcal{L}_2\}/10$.
The spatial step sizes are $h_1 = \mathcal{L}_1/(P_1-1)$ and $h_2 = \mathcal{L}_2/(P_2-1)$, where $P_1$
and $P_2$ are the number of grid points in each direction.
We assume that the number of susceptibles is constant except at the middle of the domain, and there are no
recovered species at the beginning.
Therefore, for every $1 \le k \le P_1$, $1 \le l \le P_2$ the initial conditions are given by
\begin{align*}
	I_{k,l}^0 &=  \dfrac{1}{2 \pi \sigma^2} \exp\left( -\dfrac{1}{2} \left[ \left( \dfrac{h_1(k-1) -
	\dfrac{\mathcal{L}_1}{2}}{\sigma}\right)^2 + \left(\dfrac{h_2(l-1)-\dfrac{\mathcal{L}_2}{2}}{\sigma}
	\right)^2\right]\right), \\
	S_{k,l}^0 &= \dfrac{1}{2 \pi \sigma^2}  - I_{k,l}^0, \\
	R_{k,l}^0 &= 0.
\end{align*}
In all numerical experiments - unless otherwise stated - we use the parameter values $a = 100$, $b = 0.05$,
$c = 0.01$, and $\delta = 0.05$.
The computational domain is $\Omega = [0, 1 \times [0, 1]$ with $30$ grid points in each direction, and we
use $6 \times 12$ quadrature nodes.
We also choose the tenth-stage, fourth-order SSP Runge--Kutta method (SSPRK104) for the time integration.

First we would like to study the behavior of our numerical solution. Figure~\ref{sir_plot} depicts the
numerical solution at times $t = 50$ and $t = 1000$.
As we can see, the number of susceptibles is decreased, and the number of infected moves towards the
boundaries, while forming a wave.
Both densities $S$ and $I$ tend to zero, which confirms that the zero solution is indeed an asymptotically
stable equilibrium for the first two equations of \eqref{spat}.

\subsection{Comparison of the step size bounds for the Euler method}\label{subsec:Eulercomp}
As seen in \cref{subsec:FE}, the improved bound $\widehat{\tau}$ (see \eqref{improvedstepsize}) is larger
than the pessimistic bound $\widetilde{\tau}$ (see \eqref{pessimisticstepsize}), and thus closer to the best
theoretically bound \eqref{condtheor3} that guarantees the preservation of properties $D_1$--$D_4$.
We would like to determine how close the bound $\widehat{\tau}$ is to the adaptive step-size
restriction, and compare it with the pessimistic bound $\widetilde{\tau}$.
In Table~\ref{bounds_table} we have tested several different values of $a$ and $\delta$, for which both the
bounds $\widehat{\tau}$ and $\widetilde{\tau}$ were computed.
For comparison we calculated the minimum of the adaptive step bound \eqref{condtheor3}, denoted by
$\tau_e$.
As we can see, varying the parameter $a$ or $\delta$ the time-step bound $\widehat{\tau}$ results in about
$50\%$ increase in efficiency compared to $\widetilde{\tau}$.
Also, the time-step restriction $\widehat{\tau}$ is much closer to the theoretical bound for which the
properties $D_1$--$D_4$ hold.
From Table~\ref{bounds_table} we conclude that in the case of a small increase in the time step
$\widehat{\tau}$, the forward Euler method continues to preserve the desired properties.
However, for values of $\tau$ bigger than \eqref{RK_bound}, there is no guarantee that properties
$D_1$--$D_4$ will be satisfied by a high-order time integration method.

\begin{table}
	\centering
    \small
    \begin{tabular}{c |*{2}{c@{\hskip 15pt}c |} c}
		\toprule
		\multicolumn{1}{c}{$a$} & \multicolumn{1}{c}{$\widetilde{\tau}$} &
		\multicolumn{1}{c}{$\widetilde{\tau}/\tau_e$} & \multicolumn{1}{c}{$\widehat{\tau}$} &
		\multicolumn{1}{c}{$\widehat{\tau}/\tau_e$} & \multicolumn{1}{c}{$\tau_e$} \\
		\midrule
			$50$ 	 & $2.7934$ & $0.4267$ & $6.0609$ & $0.9259$ & $6.5462$ \\
			$100$ 	 & $1.4165$ & $0.4186$ & $3.1252$ & $0.9235$ & $3.3839$ \\
			$250$ 	 & $0.5715$ & $0.4136$ & $1.2740$ & $0.9221$ & $1.3816$ \\
			$500$ 	 & $0.2865$ & $0.4193$ & $0.6411$ & $0.9381$ & $0.6833$ \\
		\bottomrule
	\end{tabular}

	\bigskip

	\begin{tabular}{c |*{2}{c@{\hskip 15pt}c |} c}
		\toprule
		\multicolumn{1}{c}{$\delta$} & \multicolumn{1}{c}{$\widetilde{\tau}$} &
		\multicolumn{1}{c}{$\widetilde{\tau}/\tau_e$} & \multicolumn{1}{c}{$\widehat{\tau}$} &
		\multicolumn{1}{c}{$\widehat{\tau}/\tau_e$} & \multicolumn{1}{c}{$\tau_e$} \\
		\midrule
			$0.025$ & $10.310$ & $0.5155$ & $20.000$ & $1.0000$ & $20.000$ \\
			$0.050$ & $1.4165$ & $0.4186$ & $3.1252$ & $0.9235$ & $3.3839$ \\
			$0.075$ & $0.4239$ & $0.4096$ & $0.9468$ & $0.9149$ & $1.0349$ \\
			$0.100$ & $0.1793$ & $0.4168$ & $0.4016$ & $0.9337$ & $0.4301$ \\
		\bottomrule
	\end{tabular}
	\smallskip
	\caption{Step-size bounds $\widehat{\tau}$ and $\widetilde{\tau}$ (see \eqref{improvedstepsize} and
		\eqref{pessimisticstepsize} respectively), and their comparison with the adaptive bound $\tau_e$
		(see \eqref{condtheor3}) for the forward Euler method for different values of $a$ and $\delta$.
		The computation uses the Elhay--Kautsky quadrature rule \eqref{gaussQ} combined with bilinear
		interpolation, and the final time is $t_\text{f} = 100$.}
    \label{bounds_table}
\end{table}

\subsection{Convergence}\label{subsec:Convergence}
Since we cannot approximate the exact solution accurately, we are going to compute the numerical errors for
different methods by using a reference solution.
To have a fair comparison the reference solution is computed by using the same parameters and method, but
with either a large number of quadrature nodes or a very small time step.

We first observe how well the different quadratures behave. As seen in \cref{sec:Spatial}, using more nodes
in quadrature \eqref{gaussQ} results in smaller errors, and also faster convergence.
Numerical experiments show that this is also the case for the system \eqref{trapeq2}.
The $L^2$-norm errors for the different quadrature formulas and interpolations are plotted in
Figure~\ref{quad_error}.
It is clear that for a small number of quadrature nodes there is no remarkable difference between the
interpolations, but for more quadrature nodes makima and spline interpolation perform better.
Bilinear interpolation results in similar errors for both quadratures \eqref{elhay_kautsky} and
\eqref{gaussQ}.
The makima and spline interpolations have a similar performance for the Elhay--Kautsky quadrature
\eqref{elhay_kautsky} and smaller errors are observed with spline interpolation and Gauss--Legendre
quadrature \eqref{gaussQ}.
Notice, thought that spline interpolation does not guarantee the preservation of properties $D_1$--$D_4$,
e.g., setting $\beta_0 = 1$ in \eqref{eq:g2} yields negative values for the infected density $I$.
\begin{figure}[!t]
\centering
\includegraphics[width=0.55\textwidth]{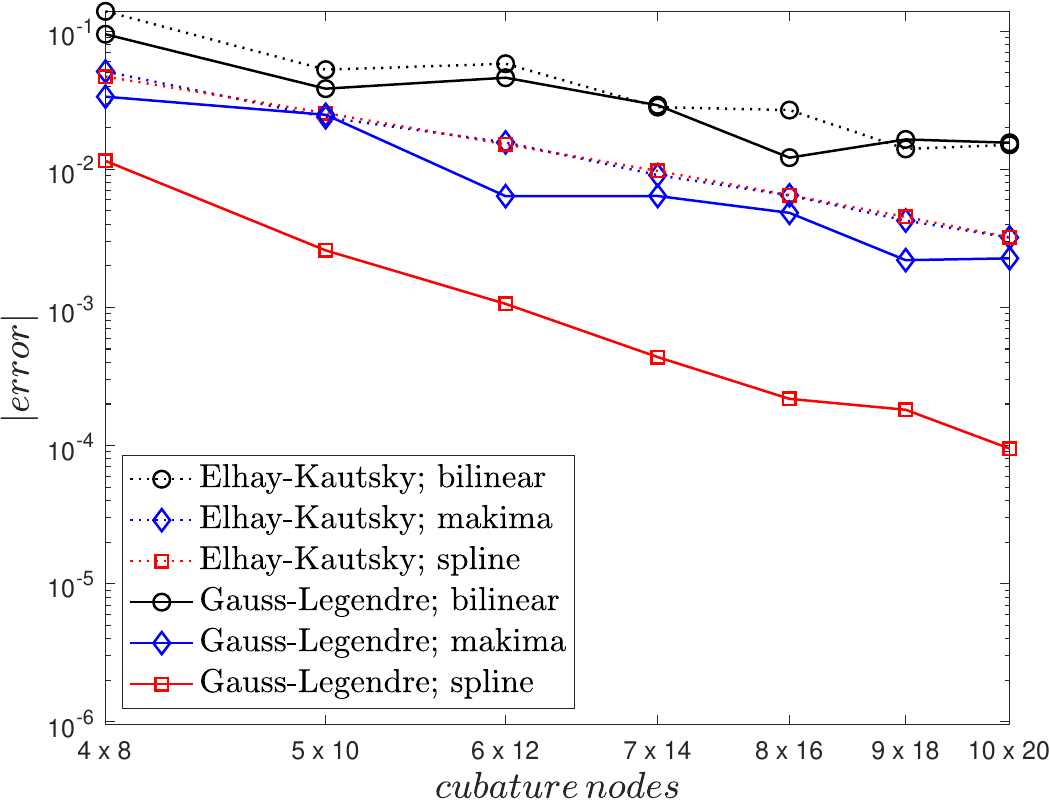}
\caption{$L^2$-norm errors using quadrature formulas \eqref{elhay_kautsky} and \eqref{gaussQ} with
$n \times 2n$ quadrature nodes, $n \in \{4,5,6,7,8,9,10\}$ and different interpolations.
The final time is $t_\text{f} = 50$ and the reference solution for each quadrature rule and interpolation is
computed by using $20 \times 40$ quadrature nodes.}
\label{quad_error}
\end{figure}

Equally important is the order of the different time integration methods.
Table~\ref{first_order_error} shows that the forward Euler method behaves similarly when compared to the
first-order integral solution described in \cref{subsec:InteqralSolution}.
Numerical experiments show that the higher order schemes work as expected, namely that by using enough
quadrature nodes and grid points, a reasonably small error can be achieved with the desired accuracy order.
Table~\ref{higher_order_error} shows the convergence rates for second-, third- and fourth-order SSP
Runge--Kutta methods when the Gauss--Legendre quadrature rule \eqref{gaussQ} is used with spline
interpolation.
The numerical solution is computed at time $\tfinal = 50$ using $30$ grid points and $6 \times 12$ quadrature
nodes.
We start with a reasonable time step $4.7$, which is slightly below the minimum of the adaptive bound
\eqref{condtheor3} when forward Euler method is used, and then successively divide by $2$.
For the reference solution we use a time step that is the half of the smallest time step in our
computations.
It is evident that using higher order methods is better than solving the integral equation \eqref{anasol}
numerically.
Moreover the fourth-order SSP Runge--Kutta method (SSPRK104) attends a six times larger
time step than lower order methods since it has an SSP coefficient $\sspcoef = 6$.
\begin{table}
\centering
    \small
    \begin{tabular}{c *{2}{| c@{\hskip 15pt}c}}
		\toprule
		\multicolumn{1}{c}{$\tau$} & \multicolumn{2}{c}{FE} & \multicolumn{2}{c}{IM} \\
		\midrule	
		$0.8250$ & $4.35 \times 10^{-1}$ &        & $1.74 \times 10^{0}$  & \\
		$0.4125$ & $2.26 \times 10^{-1}$ & $0.94$ & $1.19 \times 10^{0}$  & $0.54$ \\
		$0.2062$ & $1.14 \times 10^{-1}$ & $0.99$ & $7.32 \times 10^{-1}$ & $0.70$ \\
		$0.1031$ & $5.59 \times 10^{-2}$ & $1.03$ & $4.07 \times 10^{-1}$ & $0.85$ \\
		$0.0516$ & $2.62 \times 10^{-2}$ & $1.09$ & $2.06 \times 10^{-1}$ & $0.98$ \\
		$0.0258$ & $1.13 \times 10^{-2}$ & $1.22$ & $9.22 \times 10^{-2}$ & $1.16$ \\
		\bottomrule
	\end{tabular}
	\smallskip
	\caption{$L^2$-norm errors and convergence rates of forward Euler method (FE) and the method
	\eqref{inteq_num}, denoted by ``IM''.
	 The solution is computed at time $t_\text{f} = 50$ with the Gauss--Legendre quadrature rule
	 \eqref{gaussQ}
	 combined with spline interpolation.}
    \label{first_order_error}
\end{table}
\begin{table}
\centering
    \small
    \begin{tabular}{c *{3}{| c@{\hskip 15pt}c}}
		\toprule
		\multicolumn{1}{c}{$\tau$} & \multicolumn{2}{c}{SSPRK22} & \multicolumn{2}{c}{SSPRK33} &
		\multicolumn{2}{c}{SSPRK104} \\
		\midrule
$3.3000$ & $2.93 \times 10^{-1}$ &        & $4.45 \times 10^{-2}$ &        & $4.71 \times 10^{-4}$ & \\
$1.6500$ & $9.08 \times 10^{-2}$ & $1.69$ & $7.13 \times 10^{-3}$ & $2.64$ & $3.30 \times 10^{-5}$ & $3.84$\\
$0.8250$ & $2.53 \times 10^{-2}$ & $1.84$ & $1.01 \times 10^{-3}$ & $2.82$ & $2.18 \times 10^{-6}$ & $3.92$\\
$0.4125$ & $6.63 \times 10^{-3}$ & $1.93$ & $1.35 \times 10^{-4}$ & $2.91$ & $1.40 \times 10^{-7}$ & $3.96$\\
$0.2062$ & $1.63 \times 10^{-3}$ & $2.03$ & $1.72 \times 10^{-5}$ & $2.97$ & $8.82 \times 10^{-9}$ & $3.99$\\
$0.1031$ & $3.29 \times 10^{-4}$ & $2.30$ & $1.94 \times 10^{-6}$ & $3.15$ & $5.19 \times 10^{-10}$& $4.09$\\
		\bottomrule
	\end{tabular}
	\smallskip
	\caption{$L^2$-norm errors and convergence rates of high-order integration methods.
	 The solution is computed at time $t_\text{f} = 50$ with the Gauss--Legendre quadrature rule
	 \eqref{gaussQ}
	 combined with spline interpolation.}
    \label{higher_order_error}
\end{table}

\subsection{Runtime comparison}
The spatial discretization (interpolation and quadrature rule) is expected to be the dominant computation
load for the numerical solution of \eqref{trapeq2}.
We estimate the elapsed time of the numerical solvers for the spatial discretizations; this includes the
calculation of the improved step-size bound $\widehat{\tau}$ (see \eqref{improvedstepsize}) at the
setup of the simulation and all calculations per step for the interpolation and quadrature formulas.
Table~\ref{runtimes} compares the time required (in seconds) for the spatial discretization and the overall
computation time.
The parameters, initial conditions, computational domain, and the number of grid points and quadrature nodes
are the same as at the beginning of this section.
We use three Runge--Kutta methods (forward Euler, SSPRK33, and SSPRK104) combined with the bilinear and
makima interpolation and compute the solution at a final time  $t_\text{f} = 50$.
We choose the Gauss--Legendre quadrature \eqref{gaussQ} with for all tests as it is slightly faster than the
Elhay--Kautsky quadrature.
As shown in Table~\ref{runtimes} the time needed for the spatial discretization is almost equal to the
overall time of the simulations.
Also, the bilinear interpolation is twice as fast as the makima interpolation.
The forward Euler method uses a single function evaluation per step; hence it is the fastest among all
methods.
There is a linear increase in computation time with the three-stage SSPRK33 method because it has the same
SSP coefficient as the forward Euler method but requires three function evaluations per step.
The SSPRK104 method uses ten function evaluations per step; however, it has six times larger step-size bound
than forward Euler and thus it takes only about twice as much time.
The additional computation effort of the SSPRK104 method is compensated by a fourth-order accurate solution.
\begin{table}
\centering
    \small
    \begin{tabular}{c c *{1}{| c@{\hskip 15pt}c}}
		\toprule
		\multirow{2}{*}{method} & \multirow{2}{*}{interpolation} & \multicolumn{2}{c}{time (s)} \\
		& & spatial discretization & total runtime \\
		\midrule
		\multirow{2}{*}{FE      } & bilinear 	& $ 1.417$ & $ 1.453$ \\
					  & makima 	& $ 3.599$ & $ 3.607$ \\[5pt]
		\multirow{2}{*}{SSPRK33 } & bilinear 	& $ 3.964$ & $ 3.974$ \\
					  & makima 	& $10.596$ & $10.606$ \\[5pt]
		\multirow{2}{*}{SSPRK104} & bilinear 	& $ 2.579$ & $ 2.593$ \\
					  & makima 	& $ 6.871$ & $ 6.878$ \\
		\bottomrule
	\end{tabular}
	\smallskip
	\caption{Elapsed time of the spatial discretization compared to the overall simulation time.
	The computations use different Runge--Kutta methods and interpolations, combined with the Gauss--Legendre
	quadrature \eqref{gaussQ} with $30$ grid points in each direction and $6 \times 12$ quadrature nodes.
	The final time is $t_\text{f} = 50$.}
    \label{runtimes}
\end{table}

\section{Conclusions and further work}
In this paper, the SIR model for epidemic propagation is extended to include spatial dependence.
The existence and uniqueness of the continuous solution are proved, along with properties corresponding
to biological observations.
For the numerical solution, different choices of quadrature, interpolation, and time integration methods are
studied.
It is shown that for a sufficiently small time-step restriction, the numerical solution preserves a discrete
analog of the properties of the original continuous system.
The step-size bound is improved compared to previous results.
An adaptive step-size technique is also suggested for the explicit Euler method, and we have determined
step-size bounds for higher order methods.  
Analytic results are confirmed by numerical experiments, while the errors of quadrature formulas and the
order of accuracy of the time discretization methods are also discussed.

The work presented in this paper can be extended to diffusion spatial-dependent SIR systems, and also include
the effect of fractional diffusion.
Results for the preservation of qualitative properties of such a system could be potentially obtained in a
similar fashion as in the current manuscript.
Moreover, the inclusion the births and natural deaths in the system and dropping the conservation property
could make the model more realistic.
Several biological and epidemiological metrics, for instance, the basic reproduction number, could be also
estimated. 
It would be interesting to study the influence of such modification in the behavior of the continuous and
also the numerical solution.

\section*{Acknowledgments}
The research reported in this paper was partially carried out at the Budapest University of Technology and
Economics (BME) and has been supported by the NRDI fund under the auspices of the Ministry for Innovation
and Technology.
The authors would like to thank Lajos L\'oczi for his overall support and suggestions, and Inmaculada
Higueras and David Ketcheson for their comments.

\appendix
\section{Proofs of lemmata in \texorpdfstring{\cref{sec:Analytical}}{}}\label{appx:ProofLemmas}
We present the proofs of some technical lemmata that were omitted in the previous
sections.

\begin{proof}[Proof of Lemma~\ref{lem:normequivalence}]
The statement simply follows from
\begin{align*}
	\norm{u}^2 = c^2 \dfrac{1}{c^2} \norm{u_1}_{L^2}^2 + b^2 \dfrac{1}{b^2} \norm{u_2}_{L^2}^2 &\leq
	\max\left\lbrace\dfrac{1}{b^2},\dfrac{1}{c^2}\right\rbrace
	\left(c^2 \norm{u_1}_{L^2}^2  + b^2 \norm{u_2}_{L^2}^2\right) \\
	& = \left(\frac{1}{\min\{b,c\}}\right)^2 \norm{u}_A^2,
\end{align*}
and
\begin{align*}
	\norm{u}_A^2 = c^2 \norm{u_1}_{L^2}^2 + b^2 \norm{u_2}_{L^2}^2 \leq
	\max\{b^2,c^2\} (\norm{u_1}_{L^2}^2  + \norm{u_2}_{L^2}^2 )
	= \max\{b,c\}^2 \norm{u}^2.
\end{align*}
\end{proof}
\begin{proof}[Proof of Lemma~\ref{lem:L2boundedness}]
We are going to derive an upper bound to the term
\begin{align*}
	\norm{\mathcal{F}(u_2)}_{L^2}^2 &= \bigintsss_{\Omega} \bigg\vert \int_{0}^{\delta}
		\int_{0}^{2 \pi} g_1(r) g_2(\theta,x,y) u_2\bigl(t, \bar{x}(r,\theta), \bar{y}(r,\theta)\bigr) \, r
		\ud\theta \ud r \bigg\vert^2 \! \ud x \ud y \\
	&= \bigintsss_{\Omega} \bigg\vert\int_{B_{\delta}(\mathbf{x})} g_1(r)g_2(\theta,x,y)
		u_2(t, \bar{\mathbf{x}}) \ud\bar{\mathbf{x}} \bigg\vert^2 \! \ud\mathbf{x},
\end{align*}
where we used the notation
$\bar{\mathbf{x}} \coloneqq \bigl(\bar{x}(r,\theta),\bar{y}(r,\theta)\bigr) =
\bigl(x + r\cos(\theta), y + r\sin(\theta)\bigr)$, and $B_{\delta}(\mathbf{x})$ is the ball with radius
$\delta$ around $\mathbf{x}$.
By the definition of $g_1$ and $g_2$, we have that
\begin{align*}
	\norm{\mathcal{F}(u_2)}_{L^2}^2 = \bigintsss_{\Omega} \bigg\vert\int_{\Omega}
		g_1(r) g_2(\theta,\mathbf{x}) u_2(t, \tilde{\mathbf{x}}) \ud\tilde{\mathbf{x}} \bigg\vert^2
		\ud\mathbf{x}.
\end{align*}
We also know that $g_1$ and $g_2$ are bounded.
Using the notations $\kappa_1 = \max_{r \in (0, \delta)} \{g_1(r) \}$ and
$\kappa_2 = \max_{\theta \in [0, 2 \pi), \mathbf{x} \in \Omega} \{g_2(\theta, \mathbf{x}) \}$, yields
\begin{align*}
	\norm{\mathcal{F}(u_2)}_{L^2}^2 &=
		\bigintsss_{\Omega}\bigg\vert\int_{\Omega} 1\cdot g_1(r) g_2(\theta,\mathbf{x})
		u_2(t, \tilde{\mathbf{x}}) \ud\tilde{\mathbf{x}} \bigg\vert^2 \! \ud\mathbf{x} \\
		&\le \!\!\! \bigintss_{\Omega}\Biggg\vert \sqrt{\int_{\Omega} 1^2 \ud\tilde{\mathbf{x}}} \;
			\sqrt{\int_{\Omega} \big( g_1(r) g_2(\theta,\mathbf{x}) u_2(t, \tilde{\mathbf{x}})\big)^2
			\ud\tilde{\mathbf{x}}}\Biggg\vert^2 \! \ud\mathbf{x} \\
		&\le \mu(\Omega) \int_{\Omega} \int_{\Omega}
		\bigl\vert g_1(r) g_2(\theta,\mathbf{x}) u_2(t, \tilde{\mathbf{x}}) \bigr\vert^2
			\ud \tilde{\mathbf{x}} \ud\mathbf{x} \\
		&\le \kappa_1^2 \, \kappa_2^2 \, \mu(\Omega) \int_{\Omega} \int_{\Omega}
			\bigl\vert u_2(t, \tilde{\mathbf{x}}) \bigr\vert^2 \ud\tilde{\mathbf{x}} \ud\mathbf{x},
\end{align*}
where we used the Cauchy--Schwarz inequality, and $\mu(\Omega)$ is the Lebesgue measure of
$\Omega$.
It holds that
\begin{align*}
	\int_{\Omega}\int_{\Omega}\bigl\vert u_2(t,\tilde{\mathbf{x}})\bigr\vert^2
		\ud\tilde{\mathbf{x}} \ud\mathbf{x}
	= \int_{\Omega} \norm{u_2}_{L^2}^2 \ud\mathbf{x} = \mu(\Omega) \norm{u_2}_{L^2}^2.
\end{align*}
Consequently,
\begin{align*}
	\norm{\mathcal{F}(u_2)}_{L^2} \leq \kappa_1 \, \kappa_2 \, \mu(\Omega) \norm{u_2}_{L^2},
\end{align*}
and setting $\nu_{\mathcal{F}} = \kappa_1 \kappa_2 \, \mu(\Omega)$ we get the result of the lemma.
\end{proof}

\begin{proof}[Proof of Lemma~\ref{contdep}]
Consider the system \eqref{spat_epsilona}-\eqref{spat_epsilonb} written in the compact form \eqref{geneq},
where $A$ is given by \eqref{operatorA}, $F$ is defined in \eqref{Fdef}, and the corresponding norm
$\norm{\cdot}$ is given by \eqref{norm_E}.
Let $\{\varepsilon_i\}$ and $\{\varepsilon_j\}$ be two sequences such that
$\lim_{i\rightarrow\infty} \varepsilon_i = 0$ and $\lim_{j\rightarrow\infty} \varepsilon_j = 0$.
Assume that $u_{\varepsilon_i}(t)$ and $u_{\varepsilon_j}(t)$ are solutions of
\eqref{spat_epsilona}-\eqref{spat_epsilonb}, and define the vectors
$\widehat{\varepsilon_i} \coloneqq (0, \varepsilon_i)^\intercal$ and
$\widehat{\varepsilon_j} \coloneqq (0, \varepsilon_j)^\intercal$
Then
\begin{align*}
	u'_{\varepsilon_i} (t) - u'_{\varepsilon_j}(t) = A (u_{\varepsilon_i} (t) - u_{\varepsilon_j} (t)) +
	F(u_{\varepsilon_i}(t)) - F(u_{\varepsilon_j}(t)) + \widehat{\varepsilon_i} - \widehat{\varepsilon_j}.
\end{align*}
Using the definition of $A$ and Corollary~\ref{col:A1}, yields
\begin{align*}
	\norm{u'_{\varepsilon_i} (t) - u'_{\varepsilon_j}(t)} &\leq \norm{A (u_{\varepsilon_i} (t) -
		u_{\varepsilon_j} (t))} + \norm{F(u_{\varepsilon_i}(t)) - F(u_{\varepsilon_j}(t))} +
		\norm{\widehat{\varepsilon_i} - \widehat{\varepsilon_j}} \\
	&\leq \bigl(\norm{A} + \zeta(d)\bigr) \norm{u_{\varepsilon_i} (t) - u_{\varepsilon_j} (t)} +
		\norm{\widehat{\varepsilon_i} - \widehat{\varepsilon_j}},
\end{align*}
where $\zeta(d)$ is defined in Corollary~\ref{col:A1}.
By Grönwall's inequality (see \cite[Lemma~1.6]{Halanay:ODEs:1966}), we have
\ifwias
\begin{align*}
	\norm{u_{\varepsilon_i} (t) - u_{\varepsilon_j} (t)} &\leq
		\norm{\widehat{\varepsilon_i} - \widehat{\varepsilon_j}} t \\
		&\quad + \norm{\int_0^t \bigl(\norm{A} + \zeta(d)\bigr) \; (\widehat{\varepsilon_i} -
		\widehat{\varepsilon_j})\; t \exp \left( \int_s^t \bigl(\norm{A} + \zeta(d)\bigr) \; \ud w \right) \ud s}.
\end{align*}
\else
\begin{align*}
	\norm{u_{\varepsilon_i} (t) - u_{\varepsilon_j} (t)} \leq
		\norm{\widehat{\varepsilon_i} - \widehat{\varepsilon_j}} t +
		\norm{\int_0^t \bigl(\norm{A} + \zeta(d)\bigr) \; (\widehat{\varepsilon_i} - \widehat{\varepsilon_j})\; t
		\exp \left( \int_s^t \bigl(\norm{A} + \zeta(d)\bigr) \; \ud w \right) \ud s}.
\end{align*}
\fi
Since we assume that $\lim_{i,j\rightarrow\infty} \varepsilon_i - \varepsilon_j = 0$, the
statement is proved.
\end{proof}

\ifarxiv
	\pagebreak
\fi
\section{List of symbols and notations}
\begin{table}[!ht]
\centering
\def\arraystretch{1.3}
\begin{tabular}{M{0.3\textwidth}|L{0.6\textwidth}}
\toprule
Symbol & \multicolumn{1}{c}{Description} \\
\midrule
$S(t,x,y)$, $I(t,x,y)$, $R(t,x,y)$ & density functions of susceptible, infected and recovered species \\
\hline
$a$, $b$, $c$ & parameters describing the rate of infection, recovery, and \nobreak vaccination \\
\hline
$\delta$ & parameter describing the radius of the effect of an infectious \nobreak individual \\
\hline
$g_1(r)$, $g_2(\theta, x, y)$ & functions describing the effect of an infectious individual to its
surroundings \\
\hline
$E = L^2(\Omega) \times L^2(\Omega)$ & Hilbert-space for the solution of system \eqref{geneq} \\
\hline
$\norm{ \cdot }$ & norm of Hilbert-space $E$ defined in \eqref{norm_E} \\
\hline
$\norm{ \cdot }_A$ & operator norm of $D(A)$ defined in \eqref{norm_DA} \\
\hline
$A : D(A) \rightarrow E$ & linear part of equation \eqref{spat} \\
\hline
$F : D(A) \rightarrow E$ & non-linear part of equation \eqref{spat} \\
\hline
$\mathcal{F}$ & the integral term in \eqref{spat} \\
\hline
\ifnum0%
	\ifarxiv 1 \fi \ifwias 1 \fi > 0
$\phi (t,x,y)$ & time integral of the term $\mathcal{F}$ used in the integral solution \eqref{anasol} \\
\hline
$M_0 (x,y)$ & initial sum of densities at point $(x,y) \in \Omega$ \\
\hline
$\mathcal{Q}_{\delta}(x,y)$ & points of the quadrature used to approximate the term $\mathcal{F}$ defined in
the ball $B_{\delta}(x,y)$ \\
\hline
$w_{i,j}$ & weights of the quadrature $\mathcal{Q}_{\delta} (x,y)$ \\
\hline
$T(t, \mathcal{Q}_{\delta} (x,y))$ & approximation of the term $\mathcal{F}$ using the quadrature
$\mathcal{Q}_{\delta}(x,y)$ defined in \eqref{T_def} \\
\hline
$\Omega = [0,\mathcal{L}_1] \times [0,\mathcal{L}_2]$ & spatial domain of the semi-discretization
problem \eqref{spat_int} \\
\hline
\fi
\ifarxiv
$\mathcal{G}$ & spatial grid for the domain $\Omega$ \\
\hline
$S_{k,l}(t)$, $I_{k,l}(t)$, $R_{k,l}(t)$ & approximations of the continuous solution at point
$(x_k, y_l) \in \mathcal{G}$ \\
\hline
$Q(f)$ & numerical approximation of the integral of function $f$ over the disc with radius $\delta$ \\
\hline
$S_{k,l}^n$, $I_{k,l}^n$, $R_{k,l}^n$ & approximations of $S_{k,l}(t)$, $I_{k,l}(t)$ and $R_{k,l}(t)$ at
time $t_n$ \\
\hline
$S^n$, $I^n$, $R^n$ & matrices containing the elements $S_{k,l}^n$, $I_{k,l}^n$ and $R_{k,l}^n$ \\
\hline
\fi
\end{tabular}
\end{table}

\begin{table}[!ht]
\centering
\def\arraystretch{1.3}
\begin{tabular}{M{0.3\textwidth}|L{0.6\textwidth}}
\ifjournal
\hline
$\phi (t,x,y)$ & time integral of the term $\mathcal{F}$ used in the integral solution \eqref{anasol} \\
\hline
$M_0 (x,y)$ & initial sum of densities at point $(x,y) \in \Omega$ \\
\hline
$\mathcal{Q}_{\delta}(x,y)$ & points of the quadrature used to approximate the term $\mathcal{F}$ defined in
the ball $B_{\delta}(x,y)$ \\
\hline
$w_{i,j}$ & weights of the quadrature $\mathcal{Q}_{\delta} (x,y)$ \\
\hline
$T(t, \mathcal{Q}_{\delta} (x,y))$ & approximation of the term $\mathcal{F}$ using the quadrature
$\mathcal{Q}_{\delta}(x,y)$ defined in \eqref{T_def} \\
\hline
$\Omega = [0,\mathcal{L}_1] \times [0,\mathcal{L}_2]$ & spatial domain of the semi-discretization
problem \eqref{spat_int} \\
\fi
\ifnum0%
	\ifjournal 1 \fi \ifwias 1 \fi > 0
\hline
$\mathcal{G}$ & spatial grid for the domain $\Omega$ \\
\hline
$S_{k,l}(t)$, $I_{k,l}(t)$, $R_{k,l}(t)$ & approximations of the continuous solution at point
$(x_k, y_l) \in \mathcal{G}$ \\
\hline
$Q(f)$ & numerical approximation of the integral of function $f$ over the disc with radius $\delta$ \\
\hline
$S_{k,l}^n$, $I_{k,l}^n$, $R_{k,l}^n$ & approximations of $S_{k,l}(t)$, $I_{k,l}(t)$ and $R_{k,l}(t)$ at time
$t_n$ \\
\hline
$S^n$, $I^n$, $R^n$ & matrices containing the elements $S_{k,l}^n$, $I_{k,l}^n$ and $R_{k,l}^n$ \\
\fi
\hline
$T^n$ & matrix containing the approximations of $T(t, \mathcal{Q}_{\delta} (x,y))$ at time $t_n$ defined in
\eqref{T_kl} \\
\hline
$\tau_n$ & time step of the time integration method \\
\hline
$M_0$ & constant describing the maximum of the initial density of the population \\
\hline
$\hat{\tau}$, $\tilde{\tau}$ & time step bounds for the forward Euler method \\
\hline
$a_{ij}$, $\boldsymbol{b}$ & coefficients of the Runge--Kutta method in the Butcher form \\
\hline
$v_i$, $\alpha_{ij}$, $r$ & coefficients of the Runge--Kutta method in the Shu--Osher form \\
\hline
$\mathcal{C}$ & SSP coefficient of the Runge--Kutta method \\
\hline
$S^{(i)}$, $I^{(i)}$, $R^{(i)}$ & matrices containing the approximations of the solution at internal stages
of the Runge-Kutta method \\
\hline
$T^{(i)}$ & matrix containing the values of the approximations of $T(t, \mathcal{Q}_{\delta} (x,y))$ at
internal stages of the Runge-Kutta method \\
\hline
$\alpha_{k,l}$, $\beta_{k,l}$, $\beta_0$, $v_{k,l}^1$, $v_{k,l}^2$ & constants related to the wind profile \\
\bottomrule
\end{tabular}
\end{table}

\ifarxiv
	\pagebreak
\fi
\ifnum0%
	\ifjournal 1 \fi \ifwias 1 \fi > 0

	\ifjournal
		\bibliographystyle{elsarticle-harv}
	\else
		\bibliographystyle{acm}
	\fi
	\bibliography{references}

\else

	\printbibliography

\fi

\end{document}